	\definecolor{darkgreen}{cmyk}{0.5,0.5,1,0.4}
	\definecolor{grayA}{gray}{0.85}
	\definecolor{grayB}{gray}{0.65}
	\declaretheoremstyle[bodyfont=\normalfont]{noncursive}
	\declaretheorem[name=Theorem,numberwithin=section]{theorem}
	\declaretheorem[sibling=theorem,style=noncursive]{definition}
	\declaretheorem[sibling=theorem]{corollary} 
	\declaretheorem[sibling=theorem]{proposition}
	\declaretheorem[sibling=theorem]{lemma}
	\declaretheorem[sibling=theorem,style=remark]{remark}
	\declaretheorem[sibling=theorem,style=definition]{example}
	\numberwithin{equation}{section} 
	\newcommand{\N}{\mathbb N}
	\newcommand{\Z}{\mathbb Z}
	\newcommand{\R}{\mathbb R}
	\newcommand{\C}{\mathbb C}
	\DeclareMathOperator{\UnitSphere}{\mathbb S^1}
	\DeclareMathOperator{\re}{Re}
	\DeclareMathOperator{\im}{Im}
	\DeclareMathOperator{\Aut}{Aut}
	\DeclareMathOperator{\Auto}{Aut_0}
	\DeclareMathOperator{\stab}{stab}
	\DeclareMathOperator{\rank}{rank}
	\DeclareMathOperator{\image}{image}
	\DeclareMathOperator{\id}{id}
	\DeclareMathOperator{\incl}{incl}
	\DeclareMathOperator{\imu}{\mathrm i}
	\DeclareMathOperator{\eps}{\varepsilon}
	\DeclareMathOperator{\NTwo}{\mathcal N}
	\DeclareMathOperator{\FreeNTwo}{\mathfrak N}
	\DeclareMathOperator{\FTwo}{\mathcal F}
	\DeclareMathOperator{\FreeFTwo}{\mathfrak F}
	\DeclareMathOperator{\JetFTwo}{\mathfrak J}
	\DeclareMathOperator{\Isotropies}{\mathcal G}
	\newcommand{\HeisenbergOne}[1]{\mathbb H^{#1}}
	\newcommand{\HeisenbergTwo}[2]{\mathbb H_{#1}^{#2}}
	\newcommand{\Sphere}[1]{\mathbb S^{#1}}
	\newcommand{\Hyperquadric}[2]{\mathbb S_{#1}^{#2}}
	\newcommand{\MapMainTheoremTwo}[2]{H_{#1}^{#2}}
	\newcommand{\MapOneParameterFamilyTwo}[2]{G_{#1}^{#2}} 
	\newcommand{\MapOneParameterFamilyThree}[3]{G_{#1,#2}^{#3}}
\begin{document}
\title{Topological Aspects of Holomorphic Mappings of Hyperquadrics from $\C^2$ to $\C^3$}
\author{ Michael Reiter}
\subjclass[2010]{Primary 32H02, 32V30, 57S05, 57S25, 58D19}
\thanks{The author was supported by the FWF, projects Y377 and I382, and QNRF, project NPRP 7-511-1-098.}

\pagestyle{plain}

\begin{abstract}
Based on the results in \cite{reiter2} we deduce some topological results concerning holomorphic mappings of Levi-nondegenerate hyperquadrics under biholomorphic equivalence. We study the class $\FTwo$ of so-called nondegenerate and transversal holomorphic mappings sending locally the sphere in $\C^2$  to a Levi-nondegenerate hyperquadric in $\C^3$, which contains the most interesting mappings. We show that from a topological point of view there is a major difference when the target is the sphere or the hyperquadric with signature $(2,1)$. In the first case $\FTwo$ modulo the group of automorphisms is discrete in contrast to the second case where this property fails to hold. Furthermore we study some basic properties such as freeness and properness of the action of automorphisms fixing a given point on $\FTwo$ to obtain a structural result for a particularly interesting subset of $\FTwo$.
\end{abstract}

\maketitle

\section{Introduction and Results}
\label{sec:intro}

We study holomorphic mappings between the sphere $\Sphere{2} \subset \C^2$ and the hyperquadric $\Hyperquadric{\eps}{3} \subset \C^3$, which for $\eps = \pm 1$ is given by $\Hyperquadric{\pm}{3}  \coloneqq \bigl\{(z_1,z_2,z_3) \in \C^3:~|z_1|^2 + |z_2|^2 \pm |z_3|^2 = 1 \bigr\}$, so that $\Hyperquadric{+}{3} = \Sphere{3}$ is the sphere in $\C^3$. Faran \cite{faran} classified holomorphic mappings between spheres in $\C^2$ and $\C^3$ and Lebl \cite{lebl1} classified mappings sending $\Sphere{2}$ to $\Hyperquadric{-}{3}$. In \cite{reiter2} we give a new CR-geometric approach to reprove Faran's and Lebl's results in a unified manner. Let us introduce the following equivalence relation. For $k=1,2$ let $H_k:U_k \rightarrow \C^3$ be a holomorphic mapping where $U_k$ is an open and connected neighborhood of $p_k \in \Sphere{2}$ and $H_k(U_k \cap \Sphere{2}) \subset \Hyperquadric{\eps}{3}$. We say $H_1$ is \textit{equivalent} to $H_2$ if there exist automorphisms $\phi$ and $\phi'$ of $\Sphere{2}$ and $\Hyperquadric{\eps}{3}$ respectively such that $H_2 = \phi' \circ H_1 \circ \phi^{-1}$.

\begin{theorem}[label=theorem:MyTheorem,name={\cite[Theorem 1.3]{reiter2}}]
Let $p \in \Sphere{2}$, $U \subset \C^2$ be an open and connected neighborhood of $p$ and $H: U \rightarrow \C^3$ a non-constant holomorphic mapping satisfying $H(U \cap \Sphere{2}) \subset \Hyperquadric{\eps}{3}$. Then $H$ is equivalent to exactly one of the following maps:
\begin{compactitem}
\item[\rm{(i)}] $\MapMainTheoremTwo{1}{\eps}(z,w) = (z,w,0)$
\item[\rm{(ii)}] $\MapMainTheoremTwo{2}{\eps}(z,w) = \Bigl(z^2,\frac{ (1- \eps + z (1+ \eps)) w}{\sqrt{2}},w^2\Bigr)$
\item[\rm{(iii)}] $\MapMainTheoremTwo{3}{\eps}(z,w) = \Bigl(z, \frac{(1-\eps+z^2(1+\eps)) w}{2 z}, \frac{ (1- \eps +z(1+\eps )) w^2}{2 z} \Bigr)$
\item[\rm{(iv)}] $\MapMainTheoremTwo{4}{\eps}(z,w) = \frac{\left(4 z^3, ( 3(1- \eps) +(1+3 \eps) w^2) w, \sqrt{3} (1 -\eps + 2 (1 +\eps) w + (1-\eps) w^2 ) z\right)}{1 + 3 \eps + 3 (1-\eps) w^2}$
\end{compactitem}
Additionally for $\eps = -1$ we have:
\begin{compactitem}
\item[\rm{(v)}] $H_5(z,w) = \Bigl(\frac{(2 + \sqrt{2} z)z}{1+\sqrt{2} z + w}, w,  \frac{(1+\sqrt{2} z - w)z}{1+\sqrt{2} z+w}\Bigr)$
\item[\rm{(vi)}] $H_6(z,w) =\frac{ \left((1-w) z, 1+ w - w^2,(1+ w) z \right)} {1 - w - w^2}$
\item[\rm{(vii)}] $H_7(z,w) = \bigl(1,h(z,w),h(z,w)\bigr)$ for some non-constant holomorphic function $h: \C^2 \rightarrow \C$
\end{compactitem}
\end{theorem}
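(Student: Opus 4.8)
The plan is to reduce everything to the Heisenberg models and then run a weighted jet normalization driven by the mapping equation. First I would use Cayley transforms to replace $\Sphere{2}$ by the Heisenberg hypersurface $\HeisenbergOne{1} = \{(z,w) \in \C^2 : \im w = |z|^2\}$ and $\Hyperquadric{\eps}{3}$ by $\HeisenbergTwo{\eps}{3} = \{(z_1,z_2,w') \in \C^3 : \im w' = |z_1|^2 + \eps|z_2|^2\}$. These biholomorphisms conjugate $\Aut(\Sphere{2})$ to $\Aut(\HeisenbergOne{1})$ and $\Aut(\Hyperquadric{\eps}{3})$ to $\Aut(\HeisenbergTwo{\eps}{3})$, so the equivalence relation is preserved, and both automorphism groups now have explicit rational parametrizations (Heisenberg translations together with the isotropy subgroup of weighted dilations, unitary rotations, and the one-parameter nonisotropic automorphisms). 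By transitivity of $\Aut(\HeisenbergOne{1})$ I would move $p$ to the origin and assume $H(0) = 0$; by the reflection principle $H$ extends to a germ of a holomorphic map, and, writing $H = (f_1, f_2, g)$, the hypersurface condition becomes the single identity $\im g = |f_1|^2 + \eps|f_2|^2$ along $\HeisenbergOne{1}$.

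I would then split off the degenerate maps. For $\eps = +1$ the target is strictly pseudoconvex, so every non-constant $H$ is automatically CR-transversal at $0$. For $\eps = -1$ transversality may fail, and a direct analysis of the identity above shows the only non-transversal possibility has image inside a complex line contained in $\HeisenbergTwo{-}{3}$, which after normalization and transport back through the Cayley map is exactly the family $H_7$; such complex lines exist only in the quadric of signature $(2,1)$, explaining why $H_7$ is absent for the sphere. The remaining, transversal maps are the ones I would classify by normalization.

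For the transversal maps the core is to polarize the identity over the complexification of $\HeisenbergOne{1}$, obtaining a formal power-series relation that expresses the coefficients of $g$ through those of $f_1, f_2$ and couples successive weighted-homogeneous pieces (assigning $z, f_1, f_2$ weight $1$ and $w, g$ weight $2$). I would normalize the weight-one part of $H$ using the isotropy groups $\Auto(\HeisenbergOne{1})$ and $\Auto(\HeisenbergTwo{\eps}{3})$ acting by pre- and post-composition, bringing it to the standard linear embedding, and then show that the mapping equation together with the residual isotropy pins down every higher-weight component up to finitely many discrete branches. The main obstacle I expect is precisely this recursion: proving that once the normalization afforded by $\Auto(\HeisenbergTwo{\eps}{3})$ is exhausted, the surviving data reduce to a finite algebraic system with no continuous modulus when $\eps = +1$ and only the modulus $h$ of $H_7$ when $\eps = -1$. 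Solving that system produces the candidate normal forms, the extra discrete branches $H_5$ and $H_6$ appearing only in the indefinite case.

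Finally I would verify the list: check directly that each $\MapMainTheoremTwo{k}{\eps}$ and each of $H_5, H_6, H_7$ satisfies $\im g = |f_1|^2 + \eps|f_2|^2$, and establish pairwise inequivalence via an $\Aut$-invariant—for instance the degree of the associated rational map or the lowest weight at which the normalized jets disagree—so that each class is represented exactly once, yielding the ``exactly one'' in the conclusion.
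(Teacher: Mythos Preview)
This theorem is quoted from \cite{reiter2} rather than proved in the present paper, but the approach there can be read off from the intermediate results imported here, namely \autoref{proposition:NormalForm2Nondeg} and \autoref{theorem:ReductionOneParameterFamilies2}. Your reduction to the Heisenberg model, the separation of the non-transversal branch $H_7$, and the plan to normalize low-order jets via the isotropy group $\Isotropies=\Auto(\HeisenbergOne{2},0)\times\Auto(\HeisenbergTwo{\eps}{3},0)$ all match that approach. The gap is in what you expect the isotropic normalization to yield. You claim that once the isotropies are exhausted ``the surviving data reduce to a finite algebraic system with no continuous modulus when $\eps=+1$.'' This is false: \autoref{theorem:ReductionOneParameterFamilies2} records that the isotropic normal form $\NTwo$ consists of the isolated map $\MapOneParameterFamilyTwo{1}{\eps}$ together with two genuine one-parameter families $\MapOneParameterFamilyThree{2}{s}{\eps}$ and $\MapOneParameterFamilyThree{3}{s}{\eps}$, $s\ge 0$, which are pairwise isotropically \emph{in}equivalent. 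A continuous modulus $s$ survives the jet recursion you describe, for both signs of $\eps$.

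The step your plan is missing is the passage from isotropic equivalence to the full equivalence of the statement, i.e.\ from $\Auto$ to $\Aut$. One must compose with Heisenberg translations on source and target, which move the base point, then re-normalize at the new base point and show that a suitable choice of translation sends every $\MapOneParameterFamilyThree{k}{s}{\eps}$ to a fixed representative; only this second reduction collapses the families to the finite list (and, for $\eps=-1$, explains how the two families interact as in \autoref{figure:orbits} to produce the extra classes $H_5$ and $H_6$). Without this translation step your recursion cannot terminate in finitely many branches, and the ``exactly one'' assertion cannot be reached.
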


In fact we study holomorphic mappings between the Heisenberg hypersurface $\HeisenbergOne{2} \subset \C^2$ and $\HeisenbergTwo{\eps}{3}$, where $\HeisenbergTwo{+}{3}=\HeisenbergOne{3}$ is the Heisenberg hypersurface in $\C^3$. The hypersurfaces $\HeisenbergOne{2}$ and $\HeisenbergTwo{\eps}{3}$ are biholomorphic to $\Sphere{2}$ and $\Hyperquadric{\eps}{3}$ respectively, except one point, and are given by
\begin{align*}
\HeisenbergOne{2} =  \bigl\{(z,w) \in \C^2:~\im w = | z|^2 \bigr\}, \qquad \quad \HeisenbergTwo{\eps}{3} = \bigl\{(z_1',z_2', w') \in \C^3:~\im w' = |z_1'|^2 + \eps |z_2'|^2 \bigr\}.
\end{align*}
We denote by $\FTwo$ the class of germs of $2$-nondegenerate transversal mappings sending a small piece of $\HeisenbergOne{2}$ to $\HeisenbergTwo{\eps}{3}$ and is introduced in more details in \autoref{def:F2} below. This is, in some sense, the most natural and interesting class of mappings when studying holomorphic mappings between $\HeisenbergOne{2}$ to $\HeisenbergTwo{\eps}{3}$. From \cite{reiter2} we know that $\FTwo$ consists of mappings belonging to the orbits of the maps listed in (ii)--(vi) of \autoref{theorem:MyTheorem} with respect to the equivalence relation of automorphisms introduced above, after composing with an appropriate Cayley transform. A direct consequence of the above theorem and some intermediate classification result from \cite{reiter2} is the following topological property of the quotient space of $\FTwo$ modulo automorphisms.

\begin{theorem}[label=theorem:basicTopProps2]
The quotient space $\FTwo/_{\approx}$ with respect to the equivalence relation of \autoref{theorem:MyTheorem} is discrete for $\eps = +1$. This property fails to hold for $\eps = -1$.
\end{theorem}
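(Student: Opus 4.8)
The plan is to reduce everything to the orbit structure supplied by the classification and then separate the two cases by a single topological dichotomy. By \autoref{theorem:MyTheorem} together with the intermediate classification of \cite{reiter2}, the space $\FTwo$ decomposes as a \emph{finite} disjoint union of orbits under the group $G \coloneqq \Aut(\HeisenbergOne{2}) \times \Aut(\HeisenbergTwo{\eps}{3})$ acting by $(\phi,\phi')\cdot H = \phi' \circ H \circ \phi^{-1}$: for $\eps = +1$ these are the orbits of $\MapMainTheoremTwo{2}{+}$, $\MapMainTheoremTwo{3}{+}$, $\MapMainTheoremTwo{4}{+}$, and for $\eps = -1$ one additionally has the orbits of $H_5$ and $H_6$. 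I would equip $\FTwo$ with the topology of convergence of germs, equivalently convergence of the underlying finite jets, which is legitimate because $2$-nondegeneracy bounds the relevant jet order. Since $\FTwo/_{\approx}$ is then a finite set carrying the quotient topology, the key observation is purely topological: a finite partition of a space into orbits yields a \emph{discrete} quotient precisely when every orbit is closed, for then each orbit is the complement of the finite (hence closed) union of the remaining orbits, so every orbit is clopen.

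For $\eps = +1$ I would show that every orbit is closed. This is exactly where sphere rigidity enters, and I would deduce it from properness of the $G$-action on $\FTwo$ (a proper action has closed orbits), which is one of the structural properties the paper develops. The three orbits are thus closed, hence clopen by the observation above, and $\FTwo/_{\approx}$ is discrete. An alternative, more hands-on route is to exhibit a $G$-invariant continuous function on $\FTwo$ separating the three normal forms --- for instance a jet invariant distinguishing $\MapMainTheoremTwo{2}{+}$, $\MapMainTheoremTwo{3}{+}$, $\MapMainTheoremTwo{4}{+}$ --- so that the orbits appear as fibers of a continuous map to a discrete set.

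For $\eps = -1$ the goal is the opposite: I would exhibit a single orbit that fails to be closed, which at once destroys discreteness (indeed even the $T_1$ property). Concretely, I would construct an explicit one-parameter family $\MapOneParameterFamilyTwo{t}{-} \in \FTwo$ such that, for all $t$ in a punctured neighborhood of the limiting value, $\MapOneParameterFamilyTwo{t}{-}$ lies in one fixed orbit --- say the orbit of $H_5$ --- while as $t$ tends to that value $\MapOneParameterFamilyTwo{t}{-}$ converges in the germ topology to a map lying in a \emph{different} orbit, say the orbit of $H_6$. Verifying this requires three checks: that $\MapOneParameterFamilyTwo{t}{-}$ stays $2$-nondegenerate and transversal for every parameter value, so the family genuinely remains inside $\FTwo$; that the generic and limiting members are inequivalent, which follows from the normal-form invariants of \cite{reiter2}; and that the convergence holds at the level of jets. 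The existence of such a degeneration shows that the orbit of $H_6$ lies in the closure of the orbit of $H_5$ without coinciding with it, so the latter orbit is not closed and $\FTwo/_{\approx}$ is not discrete.

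The main obstacle is the $\eps = -1$ construction. Producing the interpolating family $\MapOneParameterFamilyTwo{t}{-}$ and checking that it never leaves $\FTwo$ while its endpoints fall into distinct automorphism orbits is the delicate part, since one must control $2$-nondegeneracy and transversality uniformly in $t$ and simultaneously track the classifying invariants across the limit. By contrast, the $\eps = +1$ direction is essentially a formal consequence of the finiteness of the orbit set together with properness of the action.
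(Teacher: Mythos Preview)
Your $\eps=-1$ argument is essentially the paper's: exhibit a sequence lying in one $\approx$-class that converges in $\FTwo$ to a map in a different $\approx$-class, which forces any two neighbourhoods of the corresponding points in the quotient to meet. The paper does not need to \emph{construct} the interpolating family from scratch, because the explicit isotropic normal form $\NTwo$ of \autoref{theorem:ReductionOneParameterFamilies2} already supplies it: there is a sequence in the $\approx$-orbit of $\MapOneParameterFamilyThree{3}{0}{-}$ converging to $\MapOneParameterFamilyThree{2}{1/2}{-}$, and these lie in distinct $\approx$-classes. So what you flag as the ``main obstacle'' is in fact already handled by the intermediate classification; the nondegeneracy and transversality checks you mention are absorbed into the statement that all $\MapOneParameterFamilyThree{k}{s}{\eps}$ belong to $\NTwo\subset\FTwo$.

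Your $\eps=+1$ argument, however, has a genuine gap. You want to deduce that each $\approx$-orbit is closed from properness of the action of the full group $G=\Aut(\HeisenbergOne{2})\times\Aut(\HeisenbergTwo{\eps}{3})$ on $\FTwo$, citing ``one of the structural properties the paper develops''. But the paper never establishes this. \autoref{theorem:propertiesAction} concerns the \emph{isotropy} group $\Isotropies=\Auto(\HeisenbergOne{2},0)\times\Auto(\HeisenbergTwo{\eps}{3},0)$ acting on the strictly smaller set $\FreeFTwo$, where the action is free; on all of $\FTwo$ the $\Isotropies$-action is not free (\autoref{lemma:StabilizerN2}), and the full group $G$ does not even act on $\FTwo$, since a generic automorphism moves the base point $0$ and hence takes a germ at $0$ outside $\FTwo$. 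So neither the group nor the space in the properness statement matches what you need, and closedness of the $\approx$-orbits does not follow. The paper's route is different and more elementary: \autoref{lemma:basicTopProps1} shows that for $\eps=+1$ the space $\FTwo$ has exactly three connected components, and by the classification these coincide with the three $\approx$-orbits. Connected components are automatically clopen, so each orbit has open preimage under the quotient map and the quotient is discrete. Your suggested alternative of producing a continuous $G$-invariant jet function separating the three classes would also work in principle, but you have not carried it out, and the connected-components argument is both shorter and already available.
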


The above result was not known before and shows one major difference between holomorphic mappings from the sphere in $\C^2$ to the sphere in $\C^3$ and to the hyperquadric with signature $(2,1)$ in $\C^3$. For a germ of a real-analytic CR-submanifold $(M,p)$ of $\C^N$ we write $\Aut_p(M,p)$ for germs of real-analytic CR-diffeomorphisms fixing $p$, which we refer to as \textit{isotropies} of $(M,p)$. Let us denote by $\Isotropies \coloneqq \Auto(\HeisenbergOne{2},0) \times  \Auto(\HeisenbergTwo{\eps}{3},0)$ the direct product of the groups of isotropies of $(\HeisenbergOne{2},0)$ and $(\HeisenbergTwo{\eps}{3},0)$ respectively, which we introduce in \autoref{def:isotropies} below in more details. We study the action of $\Isotropies$ on $\FTwo$  given by $\Isotropies \times \FTwo \rightarrow \FTwo, (\phi,\phi',H) \mapsto \phi' \circ H \circ \phi^{-1}$. We write $\FreeFTwo \subset \FTwo$ for the set of maps which have only trivial stabilizers. The action is called \textit{proper} if the associated map $(\phi,\phi',H) \mapsto (H,\phi' \circ H \circ \phi^{-1})$ is a proper map. The following result holds:

\begin{theorem}[label=theorem:propertiesAction]
The mapping $N: \Isotropies \times \FreeFTwo \rightarrow \FreeFTwo$ given by $N(\phi,\phi',H) \coloneqq \phi' \circ H \circ \phi^{-1}$ is a free and proper left action.
\end{theorem}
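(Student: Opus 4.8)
The plan is to prove freeness and properness separately, exploiting the defining property of $\FreeFTwo$ for the first and a compactness/limit argument for the second.

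For \textbf{freeness}, I would argue directly from the definition of $\FreeFTwo$. Suppose $(\phi,\phi') \in \Isotropies$ fixes some $H \in \FreeFTwo$, i.e.\ $\phi' \circ H \circ \phi^{-1} = H$, equivalently $\phi' \circ H = H \circ \phi$. This says precisely that $(\phi,\phi')$ lies in the stabilizer $\stab_{\Isotropies}(H)$ of $H$ under the action. Since $\FreeFTwo$ is by definition the set of maps with only trivial stabilizers, we must have $(\phi,\phi') = (\id,\id)$. Before concluding I would verify the group-action axioms, namely that $N(\id,\id,H) = H$ and that $N$ is compatible with composition in $\Isotropies$ (this is the statement that it is a \emph{left} action): for $(\phi_1,\phi_1')$ and $(\phi_2,\phi_2')$ one checks $\phi_2'\circ(\phi_1' \circ H \circ \phi_1^{-1})\circ \phi_2^{-1} = (\phi_2'\circ\phi_1')\circ H \circ (\phi_2\circ\phi_1)^{-1}$, which follows immediately from associativity of composition. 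I would also note that $\FreeFTwo$ is genuinely invariant under $N$, since conjugating by isotropies does not change the stabilizer up to conjugation, so trivial stabilizers are preserved.

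For \textbf{properness}, the standard criterion for a Lie group action on a manifold is that for every pair of convergent sequences $H_n \to H$ in $\FreeFTwo$ and $N(\phi_n,\phi_n',H_n) \to \tilde H$ in $\FreeFTwo$, the sequence $(\phi_n,\phi_n')$ has a convergent subsequence in $\Isotropies$. The key structural fact I would rely on is that the isotropy groups $\Auto(\HeisenbergOne{2},0)$ and $\Auto(\HeisenbergTwo{\eps}{3},0)$ are finite-dimensional Lie groups whose elements are determined by finitely many jet parameters at the origin, so that convergence in $\Isotropies$ reduces to boundedness (and convergence along a subsequence) of these parameters. The plan is to extract the relevant jet data of the equation $\phi_n' \circ H_n = \tilde H_n \circ \phi_n$, where $\tilde H_n \coloneqq N(\phi_n,\phi_n',H_n)$, and use the nondegeneracy built into the class $\FTwo$ (the maps are $2$-nondegenerate and transversal) to solve for the isotropy parameters in terms of the finite jets of $H_n$ and $\tilde H_n$ at the origin. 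Since both $H_n \to H$ and $\tilde H_n \to \tilde H$ converge, their jets converge, and the nondegeneracy guarantees that the induced map from jets of the pair back to the isotropy parameters is continuous on the relevant range, forcing $(\phi_n,\phi_n')$ to stay in a compact set and hence subconverge.

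The \textbf{main obstacle} will be the properness step, and within it the crucial point is establishing the continuous dependence of the isotropy pair $(\phi,\phi')$ on the jets of the source and target maps. Concretely, I expect to need a quantitative version of the rigidity underlying $\FTwo$: that a nondegenerate transversal map $H$ together with its image $\phi' \circ H \circ \phi^{-1}$ determines $(\phi,\phi')$ uniquely (this is essentially freeness on $\FreeFTwo$) and, moreover, that this determination is given by finitely many polynomial or rational expressions in the jet coefficients with nonvanishing denominators on the domain of interest. I would locate such expressions by differentiating the intertwining relation $\phi' \circ H = \tilde H \circ \phi$ sufficiently many times at the origin and inverting the resulting system; the nonvanishing of the relevant denominators should follow from $2$-nondegeneracy and transversality, and would need to be checked to persist under the limit $H_n \to H$, $\tilde H_n \to \tilde H$ inside $\FreeFTwo$. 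Once this continuous inverse is in hand, properness follows from the observation that a convergent-image criterion forces the isotropy parameters to converge.
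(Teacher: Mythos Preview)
Your proposal is correct and follows essentially the same route as the paper. The one organizational difference worth noting: rather than inverting the jet system for arbitrary $H,\tilde H \in \FreeFTwo$, the paper first reduces both maps to the normal form $\FreeNTwo$ (via \autoref{proposition:NormalForm2Nondeg} and \autoref{lemma:StabilizerN2}), which pins down most low-order coefficients and makes the system you describe explicitly solvable by hand (this is \autoref{lemma:SequenceN2Action}); the properness is then packaged through tom Dieck's characterization (\autoref{lemma:CharacterizationProper}) rather than the bare sequential criterion, but the content is identical to what you outline.
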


Based on this result we obtain the following theorem concerning the real-analytic structure of $\FreeFTwo$, where $\Pi: \FreeFTwo \rightarrow \FreeNTwo$ denotes the normalization map induced by the mapping $N$ and $\FreeNTwo$ denotes a particular set of representatives of the quotient $\FreeFTwo / \Isotropies$ to be given below in \autoref{lemma:StabilizerN2}:

\begin{theorem}[label=theorem:realAnalyticStructureTop]
If $\eps = +1$ then $\Pi: \FreeFTwo \rightarrow \FreeFTwo/\Isotropies$ is a real-analytic principal fibre bundle with structure group $\Isotropies$. If $\eps = -1$ then $\FreeFTwo$ is locally mapped to $\Isotropies \times \FreeNTwo$ via local real-analytic diffeomorphisms. In particular $\FreeFTwo$ is not a smooth manifold.
\end{theorem}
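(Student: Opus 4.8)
The plan is to deduce the structure of $\FreeFTwo$ from \autoref{theorem:propertiesAction} via the standard theory of proper free actions of Lie groups on manifolds. The key input is that $N$ is a free and proper left action of $\Isotropies$ on $\FreeFTwo$. Provided $\FreeFTwo$ carries a real-analytic manifold structure on which $\Isotropies$ acts by real-analytic diffeomorphisms, the quotient manifold theorem (in its real-analytic incarnation, using that $\Isotropies$ is a real-analytic Lie group and the action is smooth, free and proper) yields that $\FreeFTwo/\Isotropies$ is a real-analytic manifold and that $\Pi:\FreeFTwo \to \FreeFTwo/\Isotropies$ is a real-analytic principal fibre bundle with structure group $\Isotropies$. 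So for $\eps=+1$ the first assertion should follow almost formally once the manifold structure on $\FreeFTwo$ and the real-analyticity of the action are in place; I would make explicit that for $\eps=+1$ the parametrization of $\FreeFTwo$ (coming from the finitely many orbits in \autoref{theorem:MyTheorem}, i.e. the maps of type (ii)--(iv) composed with Cayley transforms and acted on by $\Isotropies$) does exhibit $\FreeFTwo$ as a manifold.

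First I would fix, using the explicit list in \autoref{theorem:MyTheorem} together with the description of $\FreeNTwo$ from \autoref{lemma:StabilizerN2}, a real-analytic parametrization of $\FreeFTwo$ by $\Isotropies \times \FreeNTwo$, at least locally. Concretely, the map $\Isotropies \times \FreeNTwo \to \FreeFTwo$, $(\phi,\phi',H)\mapsto \phi'\circ H\circ\phi^{-1}$, is the natural candidate for an inverse chart: because the action on $\FreeFTwo$ is free and proper and $\FreeNTwo$ is a genuine slice (a set of representatives meeting each orbit once, chosen in \autoref{lemma:StabilizerN2}), this map is injective, and I would check it is a local real-analytic diffeomorphism onto its image by verifying the differential is an isomorphism along the slice. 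Here freeness guarantees injectivity of the orbit directions, properness guarantees the image is nicely embedded, and the slice supplies the transverse directions.

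The essential point, and the place where the two cases $\eps=+1$ and $\eps=-1$ diverge, is whether $\FreeNTwo$ is a real-analytic manifold. For $\eps=+1$ the relevant orbits come from the finite list (ii)--(iv), the normalized representatives $\FreeNTwo$ form a smooth (indeed real-analytic) family, and the product decomposition globalizes into a principal bundle. For $\eps=-1$ the extra maps (v),(vi) and the essentially infinite-dimensional freedom visible already in case (vii) — and, more importantly, the failure of discreteness recorded in \autoref{theorem:basicTopProps2} — mean that $\FreeNTwo$ is not a manifold of constant dimension: the slice has strata of differing dimension, or singular points where the local model changes. I would therefore prove the positive statement only locally, establishing that near each point of $\FreeFTwo$ one has a real-analytic diffeomorphism onto an open subset of $\Isotropies\times\FreeNTwo$, and then argue that, since $\FreeNTwo$ (equivalently $\FreeFTwo/\Isotropies$) is not locally Euclidean of fixed dimension, $\FreeFTwo$ cannot be a smooth manifold.

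The main obstacle I anticipate is exactly this last nonmanifold assertion for $\eps=-1$: it is not enough to remark that the quotient is non-discrete; I must exhibit a concrete point of $\FreeFTwo$ (or of $\FreeNTwo$) at which the local dimension jumps or at which no Euclidean chart exists, and show that the local product model forces the same pathology onto $\FreeFTwo$ itself. This requires a careful analysis of the normalized family $\FreeNTwo$ near such a point — presumably at the boundary between the orbit of a ``generic'' map and the degenerate maps of type (vi) — using the explicit formulas of \autoref{theorem:MyTheorem} to compute the local structure. Because the $\Isotropies$-factor is a genuine Lie group (a smooth manifold) in both cases, all the singular behaviour is concentrated in $\FreeNTwo$, so it suffices to understand the local real-analytic geometry of $\FreeNTwo$; establishing that this geometry is non-smooth is the crux of the argument.
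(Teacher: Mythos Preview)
Your high-level strategy is the same as the paper's: apply the local slice/quotient manifold theorem for free proper real-analytic actions, and push the singular behaviour into $\FreeNTwo$. But there is a genuine gap in the $\eps=-1$ argument, and a misdiagnosis of the source of the singularity.

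First, the misdiagnosis. The failure of $\FreeNTwo$ to be a manifold for $\eps=-1$ has nothing to do with case~(vii) (those maps are not transversal, hence not in $\FTwo$ at all), nor with any ``dimension jump'' or ``infinite-dimensional freedom''. By \autoref{theorem:ReductionOneParameterFamilies2} and \autoref{lemma:StabilizerN2}, $\FreeNTwo$ consists precisely of the two one-parameter families $\{G^{-}_{2,s}:s>0\}$ and $\{G^{-}_{3,s}:s>0\}$, each of real dimension~$1$. The point is that for $\eps=-1$ these two curves \emph{intersect} at $G^{-}_{2,1/2}$ (cf.\ \autoref{figure:orbits} and the proof of \autoref{theorem:basicTopProps2}); thus $\FreeNTwo$ is two arcs crossing at a point, which is not a $1$-manifold there. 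Your proposed analysis ``near the boundary between a generic map and the degenerate maps of type (vi)'' is looking in the wrong place.

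Second, the logical gap. For $\eps=-1$ you cannot apply the quotient manifold theorem directly to $\FreeFTwo$, because $\FreeFTwo$ is not a manifold --- that is exactly the conclusion you want. The paper resolves this by first proving (\autoref{lemma:preimageMF}) that each piece $M_{k,\eps}=\Pi^{-1}(\{G^{\eps}_{k,s}:s>0\})$, $k=2,3$, \emph{is} a real-analytic $16$-dimensional submanifold of jet space; this is a nontrivial rank computation, not a formality. The slice theorem is then applied separately to $M_1$ and $M_2$, yielding local real-analytic diffeomorphisms $M_k\cong\Isotropies\times N_k$, and these are glued along the common orbit $\Pi^{-1}(G^{-}_{2,1/2})$. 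The non-manifold assertion then follows by contrapositive: were $\FreeFTwo$ smooth, the (smooth) slice theorem would force $\FreeNTwo\simeq\FreeFTwo/\Isotropies$ to be a smooth $1$-manifold, contradicting the crossing. Your outline omits both the manifold structure on the pieces $M_k$ and this two-step application of the slice theorem.
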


Note that the second part of \autoref{theorem:realAnalyticStructureTop} stands in contrast to the case of $\Aut_p(M,p)$. Assuming some nondegeneracy conditions for certain germs of real-analytic CR-submanifolds $(M,p)$, such as Levi-nondegeneracy, it is known that $\Aut_p(M,p)$ admits a manifold structure, see \cite{BER97}, \cite{BER99}, \cite{BRWZ04}, \cite{kowalski}, \cite{KZ05}, \cite{LM07}, \cite{LMZ08} and \cite{JL2013}. To prove \autoref{theorem:realAnalyticStructureTop} we use a real-analytic version of the so-called local slice theorem for free and  proper actions. For proper smooth actions of non-compact Lie groups the first proof of the local slice theorem was given in \cite[2.2.2 Proposition]{palais}. In the real-analytic setting a global slice theorem was proved by \cite[section VI]{HHK} and \cite[Theorem 0.6]{IK}. We also obtain the following result about the different topologies we can associate to the quotient space $\FreeFTwo/\Isotropies$:

\begin{theorem}[label=theorem:quotientTop]
The quotient topology $\tau_Q$ on $\FreeFTwo/\Isotropies \simeq \FreeNTwo$ coincides with the induced topology $\tau_J$ of $\FreeFTwo$, which carries the topology induced by the jet space $J^3_0(\HeisenbergOne{2},\HeisenbergTwo{\eps}{3})$.
\end{theorem}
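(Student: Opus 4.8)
The plan is to reduce the statement to a single continuity assertion about the \emph{normalization map} $\Pi$. Write $G \coloneqq \Isotropies$ and let $X \coloneqq \FreeFTwo$ carry the topology $\tau_J$ induced via the embedding $\FreeFTwo \hookrightarrow J^3_0(\HeisenbergOne{2},\HeisenbergTwo{\eps}{3})$. Let $\pi\colon X \to X/G$ be the canonical projection equipping $X/G$ with the quotient topology $\tau_Q$. Since $\FreeNTwo$ is a complete set of orbit representatives, the composite $\beta \coloneqq \pi \circ \iota \colon \FreeNTwo \to X/G$, where $\iota\colon \FreeNTwo \hookrightarrow X$ is the inclusion, is a bijection. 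Equipping $\FreeNTwo$ with the subspace topology $\tau_J|_{\FreeNTwo}$, the claimed equality $\tau_Q = \tau_J$ is then precisely the statement that $\beta$ is a homeomorphism.

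One direction is automatic. The map $\beta$ is continuous, being a composition of the continuous maps $\iota$ and $\pi$; equivalently, since $\Pi|_{\FreeNTwo} = \id$, any $U \subseteq \FreeNTwo$ with $\Pi^{-1}(U) \in \tau_J$ satisfies $U = \Pi^{-1}(U) \cap \FreeNTwo \in \tau_J|_{\FreeNTwo}$, so that $\tau_Q \subseteq \tau_J$ holds without further argument. For the reverse inclusion I would invoke the universal property of the quotient topology: $\beta^{-1}$ is continuous if and only if $\beta^{-1}\circ \pi \colon X \to \FreeNTwo$ is continuous. Because $\Pi(H)$ lies in the same orbit as $H$, one has $\pi = \beta \circ \Pi$ and hence $\beta^{-1}\circ \pi = \Pi$. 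Thus the theorem is \emph{equivalent} to the continuity of $\Pi\colon (\FreeFTwo,\tau_J) \to (\FreeNTwo, \tau_J|_{\FreeNTwo})$.

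To establish this continuity I would use the local product structure of \autoref{theorem:realAnalyticStructureTop}, which is available for both $\eps = \pm 1$ and whose proof relies on the free and proper action of \autoref{theorem:propertiesAction} rather than on the present statement, so there is no circularity. For every $H_0 \in \FreeFTwo$ that result furnishes a $\tau_J$-neighbourhood $V$ together with a real-analytic diffeomorphism $\Psi\colon V \to W \times O$ onto an open subset of $\Isotropies \times \FreeNTwo$. By construction these are slice charts, $G$-equivariant and with $\FreeNTwo$-component constant along orbits, so that $\Pi|_V = \pr_2 \circ \Psi$, where $\pr_2$ is the projection onto the $\FreeNTwo$-factor. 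As a composition of the continuous $\Psi$ with the continuous projection $\pr_2$, the restriction $\Pi|_V$ is continuous, and since $H_0$ was arbitrary, $\Pi$ is continuous. By the reduction above, $\beta$ is then a homeomorphism and $\tau_Q = \tau_J$.

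The main obstacle is exactly this continuity of $\Pi$, i.e.\ the statement that the normalizing isotropy depends continuously on the map: for $H \in \FreeFTwo$ one selects the unique pair $(\phi,\phi') \in \Isotropies$ with $\phi' \circ H \circ \phi^{-1} \in \FreeNTwo$, and must show $H \mapsto (\phi,\phi')$ is continuous in the jet topology. Uniqueness is guaranteed by freeness, whereas the continuous — indeed real-analytic — dependence is what the slice charts of \autoref{theorem:realAnalyticStructureTop} encode. Alternatively, and should one wish a proof independent of the bundle structure, I would solve the normal-form equations of \cite{reiter2} for the group parameters by the implicit function theorem and combine this with the fact that $(\phi,\phi',H) \mapsto \phi' \circ H \circ \phi^{-1}$ induces a continuous (rational) map on the relevant $3$-jet coordinates, which again yields the continuity of $\Pi$ and hence the theorem.
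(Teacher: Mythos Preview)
Your reduction is correct and in fact slightly sharper than the paper's: you observe that, because the inclusion $\iota\colon \FreeNTwo \hookrightarrow \FreeFTwo$ is a continuous section of $\Pi$, the inclusion $\tau_Q \subseteq \tau_J|_{\FreeNTwo}$ is automatic, so the whole statement reduces to the continuity of $\Pi$. The paper instead proves that $\Pi$ is surjective, continuous \emph{and closed}, and deduces that $\Pi$ is a quotient map; the closedness argument (via the explicit parameter description of $\FreeNTwo$ in \autoref{theorem:ReductionOneParameterFamilies2}) is therefore unnecessary once one notices the section.

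For the continuity of $\Pi$ the two arguments diverge. The paper argues sequentially, invoking \autoref{lemma:SequenceN2Action} directly: if $H_n \to H \in \FreeNTwo$ and $\Pi(H_n) = \phi_n' \circ H_n \circ \phi_n^{-1}$, one forces $(\phi_n,\phi_n') \to (\id,\id)$ and hence $\Pi(H_n) \to H$. You instead read off continuity from the local product charts of \autoref{theorem:realAnalyticStructureTop}. This is legitimate and non-circular, but note a small wrinkle: for $\eps = +1$ the statement of \autoref{theorem:realAnalyticStructureTop} only asserts a bundle over $\FreeFTwo/\Isotropies$ with its quotient topology, not over $(\FreeNTwo,\tau_J)$; to get $\Pi$ continuous into $\tau_J$ you must go into the \emph{proof} (where the slices $S_k$ are identified with pieces of $\FreeNTwo$ via the real-analytic normalization of \autoref{proposition:NormalForm2Nondeg}). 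Your closing alternative---that the normalizing isotropies depend real-analytically on $j_0^3(H)$---is exactly this point, and is in fact already recorded in the proof of \autoref{thm:differentNormalForm}; citing that would make your argument entirely self-contained and arguably the shortest route to the theorem.
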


We organize this paper as follows: We introduce the necessary notations, tools and results in \Autoref{sec:Preliminaries}. In \Autoref{sec:HomeomorphicNormalForm} we study different normal forms with respect to isotropies and in \Autoref{section:BasicTopProps} we investigate the connectedness of $\FTwo$ and discreteness of the quotient space. In the remaining sections we study properties of the action of the group of isotropies on $\FTwo$, which finally result in some structural and topological information of $\FreeFTwo$ and $\FreeFTwo/\Isotropies$ respectively in \Autoref{sec:Structure}. This article is partly based on the author's thesis \cite{reiter1} at the University of Vienna. Some computations are carried out with \textit{Mathematica 7.0.1.0} \cite{wolfram}.

\section{Preliminaries}
\label{sec:Preliminaries}

\begin{definition}
We fix coordinates $(z,w)=(z_1,\ldots, z_n,w) \in \C^{n+1}$. For $h: \C^{n+1} \rightarrow \C$ a holomorphic function $h(z,w) = \sum_{\alpha, \beta} a_{\alpha \beta} z^{\alpha} w^{\beta}$ defined near $0$ we write $\bar{h}(\bar z,\bar w) \coloneqq \overline{h(z,w)}=\sum_{\alpha, \beta} \bar{a}_{\alpha \beta} \bar z^{\alpha} \bar w^{\beta}$  for the complex conjugate of $h$. Derivatives of $h$ with respect to $z$ or $w$ we denote by $h_{z^{\alpha} w^{\beta}}(0)\coloneqq  \frac{\partial^{|\alpha|+ |\beta|} h}{z^{\alpha} w^{\beta}}(0)$. For $n\geq 1$ and a map $H: \C^{n+1} \rightarrow \C^{n'+1}$ defined near $0$ with components $H= \bigl(f_1,\ldots,f_{n'},g \bigr)$ we write $H_{z^{\alpha} w^{\beta}}(0) = \bigl (f_{1 z^{\alpha} w^{\beta}}(0),\ldots, f_{n' z^{\alpha} w^{\beta}}(0),g_{z^{\alpha} w^{\beta}}(0) \bigr)$.
\end{definition}

\subsection{Classes of Maps, Automorphisms and Equivalence Relations}

\begin{definition}
We write $\mathcal H(p;p') \coloneqq \{H:(\C^N,p) \rightarrow (\C^{N'},p'): ~ H \text{ holomorphic} \}$ for the \textit{set of germs of holomorphic mappings from $(\C^N,p)$ to $(\C^{N'},p')$}. For $(M,p) \subset \C^N$ and $(M',p') \subset \C^{N'}$ germs of real-analytic hypersurfaces we denote by 
\begin{align*}
\mathcal H(M,p;M',p') \coloneqq \{H \in \mathcal H(p;p'): H(M \cap U) \subset M' \text{ for some neighborhood $U$ of $p$}\},
\end{align*}
the \textit{set of germs of holomorphic mappings from $(M,p)$ to $(M',p')$}.
\end{definition}

\begin{definition}[label=def:isotropies]
\begin{compactitem} 
\item[\rm{(i)}] The collection of germs of locally real-analytic CR-diffeomorphisms of $(M,p)$ we denote by $\Aut(M,p) \coloneqq \{H: (\C^N,p) \rightarrow \C^N: H \text{ holomorphic}, H(M) \subset M, \det(H'(p)) \neq 0 \}$ and the \textit{group of isotropies of $(M,p)$ fixing $p$} by $\Aut_p(M,p) \coloneqq \{H \in \mathcal \Aut(M,p): H(p)=p\}$.
\item[\rm{(ii)}] We write $\R^+\coloneqq\{x \in \R:~x>0\}$, denote the unit sphere in $\C$ by $\UnitSphere \coloneqq \{e^{\imu t}: 0 \leq t < 2 \pi \}$ and set $\Gamma \coloneqq \R^+ \times \R \times \UnitSphere \times \C$. For an element $\sigma_{\gamma} \in \Auto(\HeisenbergOne{2},0)$ we denote $\gamma=(\lambda, r,u,c) \in \Gamma$ and write:
\begin{align}
\label{eq:sigma}
\sigma_{\gamma}(z,w) \coloneqq \frac{(\lambda u (z + c w), \lambda^2 w)}{1 - 2 \imu \bar c z + (r - \imu |c|^2) w}.
\end{align}
\item[\rm{(iii)}]
We define for $\sigma = \pm 1$ if $\eps = -1$ and $\sigma = +1$ if $\eps=+1$
\begin{align}
\label{eq:setS}
\mathcal S^2_{\eps,\sigma} \coloneqq \big\{a' =(a_1', a_2') \in \C^2:  |a_1'|^2 + \eps |a_2'|^2 = \sigma \big\},
\end{align}
and let
\begin{align}
\label{eq:UTwoOneOne}
U' \coloneqq \left(\begin{array}{cc}
u' a_1' & - \eps u' a_2'  \\
\bar a_2' & \bar a_1' \end{array}\right),  \qquad u' \in \UnitSphere, \quad a'=(a_1',a_2') \in \mathcal S^2_{\eps,\sigma}.
\end{align}
We set $\Gamma' \coloneqq \R^+ \times \R \times \UnitSphere \times \mathcal S^2_{\eps,\sigma} \times \C^2$ to denote elements $\sigma'_{\gamma'} \in\Auto(\HeisenbergTwo{\eps}{3},0)$ via $\gamma' =(\lambda',r',u',a',c') \in \Gamma'$, where $c'=(c_1',c_2')$:
\begin{align}
\label{eq:tau}
\sigma'_{\gamma'}(z',w') \coloneqq \frac{(\lambda' U'~{^t(z' +c' w')}, \sigma {\lambda'}^2 w')}{1 - 2 \imu (\bar c_1' z_1' + \eps \bar c_2' z_2') + \bigl(r' - \imu (|c_1'|^2 + \eps |c_2'|^2) \bigr) w'}.
\end{align}
\item[\rm{(iv)}] We call elements of $\Gamma \times \Gamma'$ \textit{standard parameters}. If the standard parameters $(\gamma,\gamma') \in \Gamma\times \Gamma'$ are chosen such that $(\sigma_{\gamma},\sigma'_{\gamma'}) = (\id_{\C^2}, \id_{\C^3})$, we say the standard parameters are \textit{trivial}.
\end{compactitem}
\end{definition}

 \begin{definition}[label=definition:localEquivalence]
For $G,H \in \mathcal H(M,p;M',p')$ we denote the following equivalence relation: $G \sim H :\Leftrightarrow  \exists~ (\phi,\phi') \in \Aut_p(M,p) \times  \Aut_{p'}(M',p')$: $G = \phi' \circ H \circ \phi^{-1}$. The equivalence classes in $\mathcal H(M,p;M',p')/_{\sim}$ are denoted by $[F] \coloneqq \{G \in \mathcal H(M,p;M',p'): G \sim F\}$.\\
In the case where $(p,p')=(0,0)$ and $(M,M')=(\HeisenbergOne{2},\HeisenbergTwo{\eps}{3})$ we call the above relation \textit{isotropic equivalence} and write $O_0(H)$ for the orbit of a map $H$, called the \textit{isotropic orbit of H}.
\end{definition}

\subsection{The Class \texorpdfstring{$\FTwo$}{F2}, the Normal Form \texorpdfstring{$\NTwo$}{N2} and its Classification}

In \cite{reiter2} we introduced the following class of mappings, which are $2$-nondegenerate and transversal. These mappings represent the immersive maps, which are not equivalent to the linear embedding, see \cite[Proposition 2.16]{reiter2}.

\begin{definition}[label=def:F2]
For a neighborhood $U \subset \C^2$ of $0$ let us denote the set $\FTwo(U)$ of holomorphic mappings $H=(f_1,f_2,g)$ with $H(U \cap \HeisenbergOne{2}) \subset \HeisenbergTwo{\eps}{3}$, which satisfy $H(0) =0$, $f_{1 z} (0) f_{2 z^2} (0) -f_{2 z} (0)f_{1 z^2} (0) \neq 0$ and $g_w(0) >0$. We denote by $\FTwo$ the set of germs $H$, such that $H \in \FTwo(U)$ for some neighborhood $U \subset \C^2$ of $0$.
\end{definition}

\begin{proposition}[name={\cite[Propostion 3.1]{reiter2}},label=proposition:NormalForm2Nondeg]
Let $H \in \FTwo$. Then there exist isotropies $(\sigma ,\sigma') \in \Isotropies$ such that $\widehat H \coloneqq \sigma' \circ H \circ \sigma^{-1}$ satisfies $\widehat H (0) =0$ and the following conditions:
\begin{multicols}{3}
\begin{compactitem}
\item[\rm{(i)}] $\widehat H_z(0) =(1,0,0)$
\item[\rm{(ii)}] $\widehat H_w(0) = (0,0,1)$
\item[\rm{(iii)}] $\widehat f_{2 z^2}(0) = 2$
\item[\rm{(iv)}] $\widehat f_{2 z w}(0) = 0$
\item[\rm{(v)}] $\widehat f_{1 w^2}(0) = |\widehat f_{1 w^2}(0)| \geq 0$
\item[\rm{(vi)}] $\re\bigl(\widehat g_{w^2}(0)\bigr) = 0$
\item[\rm{(vii)}] $\re\bigl(\widehat f_{2 z^2 w}(0) \bigr) = 0$
\end{compactitem}
\end{multicols}
A holomorphic mapping of $\FTwo$ satisfying the above conditions is called a normalized mapping. The set of normalized mappings is denoted by $\NTwo$.
\end{proposition}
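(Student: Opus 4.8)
The plan is to achieve the normalization order by order in the Taylor expansion of $\widehat H = \sigma' \circ H \circ \sigma^{-1}$ at $0$, exploiting the explicit parametrizations of $\Auto(\HeisenbergOne{2},0)$ and $\Auto(\HeisenbergTwo{\eps}{3},0)$ recorded in \autoref{def:isotropies}. First I would expand $\sigma_\gamma$ and $\sigma'_{\gamma'}$ from \eqref{eq:sigma} and \eqref{eq:tau} about $0$ and express the jets $\widehat H_{z^\alpha w^\beta}(0)$ up to order three as polynomial functions of the standard parameters $(\gamma,\gamma') \in \Gamma \times \Gamma'$ and of the jets of $H$. Throughout, the decisive constraint is the mapping equation $\im \widehat g = |\widehat f_1|^2 + \eps |\widehat f_2|^2$ along $\HeisenbergOne{2}$: substituting $\im w = |z|^2$ and comparing coefficients yields a family of identities among the $\widehat H_{z^\alpha w^\beta}(0)$, which both eliminates redundant coefficients and forces certain combinations (precisely those appearing in (v)--(vii)) to be real, so that the reality conditions become genuine one-real-parameter normalizations rather than full complex ones.

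For the first-order part I would compute the Jacobians at $0$, obtaining for the source the linear map with matrix $\left(\begin{smallmatrix} \lambda u & \lambda u c \\ 0 & \lambda^2\end{smallmatrix}\right)$ and for the target the analogous block built from $\lambda' U'$ and $\sigma {\lambda'}^2$. The defining conditions of $\FTwo$, namely $g_w(0) > 0$ together with $f_{1z}(0) f_{2z^2}(0) - f_{2z}(0)f_{1z^2}(0) \neq 0$, guarantee that $H$ is transversal and $2$-nondegenerate; combined with the mapping equation, the first-order data are restricted so that essentially only the CR-differential $(f_{1z},f_{2z})(0) \neq 0$ and the normal derivative $g_w(0)$ remain free. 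Using the positive scalings $\lambda,\lambda'$, the rotations $u,u' \in \UnitSphere$ and the pseudo-unitary parameter $a' \in \mathcal S^2_{\eps,\sigma}$ entering $U'$, I would rotate and rescale this differential to the standard form, yielding conditions (i) and (ii); the sign constraints $\lambda,\lambda' > 0$ and $g_w(0) > 0$ fix the orientation so that $\widehat g_w(0) = 1$ is attained exactly.

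With the first order fixed there remains a residual isotropy subgroup, parametrized essentially by the shifts $r,r'$, the translation-type parameters $c,c'$ and a leftover rotation, which I would use to treat the second- and third-order conditions. Condition (iii) is reached by rescaling, the $2$-nondegeneracy of $\FTwo$ ensuring $\widehat f_{2z^2}(0) \neq 0$ so that its modulus can be sent to $2$ by $\lambda,\lambda'$ and its phase killed by the residual rotation; condition (iv), $\widehat f_{2zw}(0) = 0$, is solved for the translation parameters $c,c'$, which enter the mixed second derivative linearly with an invertible coefficient coming again from the nondegeneracy; condition (v) is the phase normalization of $\widehat f_{1w^2}(0)$ by the remaining element of $\UnitSphere$; and conditions (vi), (vii), which by the mapping equation are purely real one-dimensional constraints, are solved by the real shift parameters $r,r'$. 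The point is that after a suitable reordering the parameter-to-jet map has a triangular (echelon) structure, so each condition is dispatched by a fresh set of parameters without disturbing those already normalized.

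The main obstacle is precisely verifying this triangularity and the invertibility of the relevant sub-blocks of the parameter-to-jet map at each order: since $\sigma_\gamma$ and $\sigma'_{\gamma'}$ are fractional-linear, the higher jets of $\widehat H$ depend on the parameters and on the lower jets of $H$ through rather involved polynomial expressions, and the contributions of the denominators must be tracked carefully. The key is to show that at every stage the linearization of the residual normalization map in the still-free parameters is nonsingular; via the mapping equation this reduces to the two defining inequalities of $\FTwo$. Once this is established, explicit solvability at each order produces the desired $(\sigma,\sigma')$, and the resulting $\widehat H$ lies in $\NTwo$ by definition.
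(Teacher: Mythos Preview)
The paper does not actually contain a proof of this proposition: it is quoted from \cite[Proposition 3.1]{reiter2} and only the statement is reproduced here, so there is no in-paper argument to compare against directly.

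That said, your outline is the correct and standard one, and it matches the way the paper handles the normalization implicitly elsewhere. The computations you describe---expressing the low-order jets of $\sigma'\circ H\circ\sigma^{-1}$ as explicit functions of the standard parameters $(\gamma,\gamma')$ and solving for those parameters in a triangular fashion---are precisely the computations the paper carries out in the reverse direction in the proof of \autoref{lemma:StabilizerN2} (equations \eqref{eq:freenessSecondOrderZ2}--\eqref{eq:freenessThirdOrderZ2W} exhibit exactly which parameter governs which coefficient) and in the proof of \autoref{lemma:SequenceN2Action} (equations \eqref{eq:sequenceFirstOrderZ}--\eqref{eq:sequenceThirdOrder}). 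The remark in the proof of \autoref{thm:differentNormalForm} that the normalizing isotropies depend real-analytically on $j_0^3(H)$ is the output of exactly the solvability-with-invertible-linearization argument you propose. So your plan is consistent with the paper's treatment and with what the cited proof in \cite{reiter2} contains.
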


\begin{remark}[label=remark:SummaryJetNormalizedMapping]
A mapping $H \in \NTwo$ necessarily satisfies the following conditions, see \cite[Remark 3.4]{reiter2}:
\begin{multicols}{2}
\begin{compactitem}
\item[\rm{(i)}]  $H(0)= (0,0,0)$
\item[\rm{(ii)}]  $H_z(0) = (1,0,0)$
\item[\rm{(iii)}]  $H_w(0) = (0,0,1)$
\item[\rm{(iv)}] $H_{z^2}(0) = (0,2,0)$
\item[\rm{(v)}] $H_{zw}(0) = (\frac{\imu \eps} 2 ,0,0)$
\item[\rm{(vi)}]  $H_{w^2}(0) = (|f_{1 w^2}(0)|, f_{2 w^2}(0),0)$
\item[\rm{(viii)}] $H_{z^2 w}(0) = (4 \imu |f_{1 w^2}(0)|,\imu \im(f_{2 z^2 w}(0)), 0)$
\end{compactitem}
\end{multicols}
\end{remark} 

We classify all mappings belonging to $\NTwo$ in \cite{reiter2}.

\begin{theorem}[name={\cite[Theorem 4.1]{reiter2}},label=theorem:ReductionOneParameterFamilies2]
The set $\NTwo$ consists of the following mappings, where $s \geq0$:
\begin{align*}
\MapOneParameterFamilyTwo{1}{\eps}(z,w)\coloneqq&~ \Bigl(2 z (2 + \imu \eps w),4 z^2,4 w\Bigr)/(4-w^2),\\
\MapOneParameterFamilyThree{2}{s}{\eps}(z,w)\coloneqq& ~\Bigl(4 z - 4 \eps s z^2 + \imu (\eps  - s^2) z w  + s w^2,  4 z^2 + s^2 w^2,w (4- 4 \eps s z - \imu  (\eps +s^2) w) \Bigr) \\
& \quad  \slash \Bigl( 4 - 4 \eps s z - \imu (\eps+ s^2) w   - 2 \imu s z w-  \eps s^2 w^2 \Bigr),\\
\MapOneParameterFamilyThree{3}{s}{\eps}(z,w)\coloneqq & ~\Bigl(256 \eps z  + 96 \imu z w + 64 \eps s w^2 + 64 z^3 + 64 \imu \eps s z^2 w - 3 (3 \eps - 16 s^2) z w^2  + 4 \imu s w^3,\\
& \quad  256 \eps z^2 - 16 w^2 + 256s z^3 + 16 \imu z^2 w - 16 \eps s z w^2 - \imu \eps w^3, \\
& \quad w(256 \eps - 32 \imu w + 64 z^2  - 64 \imu \eps s z w  -  (\eps + 16 s^2) w^2)\Bigr) \\
& \quad \slash  \Bigl( 256 \eps - 32 \imu w + 64 z^2 - 192 \imu \eps s z w  - (17 \eps + 144 s^2) w^2  + 32 \imu \eps z^2 w + 24 s z w^2\\
& \quad + \imu w^3 \Bigr).
\end{align*}
Each mapping in $\NTwo$ is not isotropically equivalent to any different mapping in $\NTwo$.
\end{theorem}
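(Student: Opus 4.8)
The plan is to analyze the fundamental mapping equation expressing that $H=(f_1,f_2,g)$ sends $\HeisenbergOne{2}$ into $\HeisenbergTwo{\eps}{3}$. Complexifying the reality condition along $w=\tau+2\imu z\chi$, the inclusion $H(\HeisenbergOne{2})\subset\HeisenbergTwo{\eps}{3}$ turns into the holomorphic identity
\[
g(z,w)-\bar g(\chi,\tau)=2\imu\bigl(f_1(z,w)\bar f_1(\chi,\tau)+\eps f_2(z,w)\bar f_2(\chi,\tau)\bigr),\qquad w=\tau+2\imu z\chi,
\]
in the independent variables $(z,\chi,\tau)$ near $0$. First I would substitute the power series of $f_1,f_2,g$ and match coefficients of the monomials $z^a\chi^b\tau^c$; each such coefficient produces a polynomial relation among the Taylor coefficients $H_{z^\alpha w^\beta}(0)$ and their conjugates. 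The advantage of working inside $\NTwo$ is that \autoref{proposition:NormalForm2Nondeg} and \autoref{remark:SummaryJetNormalizedMapping} already fix the full $2$-jet and almost all of the $3$-jet, leaving only the real data $|f_{1w^2}(0)|$, $f_{2w^2}(0)$ and $\im\bigl(f_{2z^2w}(0)\bigr)$ free. The decisive structural input is the $2$-nondegeneracy built into $\FTwo$ (\autoref{def:F2}): it forces $H$ to be determined by a finite jet, so that the identity propagates the fixed low-order data to all orders, $H$ is rational of bounded degree, and the classification reduces to a finite algebraic problem in the few free parameters.

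Next I would turn the collected relations into an explicit recursion. Feeding the normalized low-order data into the identity and matching successive coefficients yields an overdetermined algebraic system in the surviving free parameters; it is the consistency of this system at the first few critical orders that cuts down the solution set. I expect this to separate the normalized maps into three mutually exclusive cases, according to the vanishing pattern of the remaining $3$-jet entries, corresponding to $\MapOneParameterFamilyTwo{1}{\eps}$, $\MapOneParameterFamilyThree{2}{s}{\eps}$ and $\MapOneParameterFamilyThree{3}{s}{\eps}$, with $\MapOneParameterFamilyTwo{1}{\eps}$ rigid and a single real modulus $s\ge 0$ surviving in the other two families. Once the recursion is solved, the resulting formal series must be summed to closed form; since the relations are polynomial and the solutions rational, I would make a rational Ansatz $H=P/Q$ of the degree dictated by the recursion and solve for the coefficients of $P$ and $Q$ (a natural place for a computer-algebra check). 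Conversely, one verifies by direct substitution that each listed map genuinely lies in $\NTwo$, i.e.\ satisfies the mapping identity and all normalization conditions of \autoref{proposition:NormalForm2Nondeg}; this closes the classification.

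For the rigidity assertion I would argue structurally. Suppose $G,H\in\NTwo$ are isotropically equivalent, say $G=\sigma'\circ H\circ\sigma^{-1}$ with $(\sigma,\sigma')\in\Isotropies$. Since both $G$ and $H$ already satisfy the normalization, the pair $(\sigma,\sigma')$ must preserve the normal-form conditions. I would first determine this residual subgroup of $\Isotropies$ by imposing the conditions of \autoref{proposition:NormalForm2Nondeg} on $\sigma'\circ H\circ\sigma^{-1}$ and reading off which standard parameters $(\gamma,\gamma')\in\Gamma\times\Gamma'$ (\autoref{def:isotropies}) survive. One then computes the induced action of this residual group on the modulus $s$ and on the discrete branch label and checks that it is trivial, so that distinct values of $s$, and maps from distinct families, remain genuinely inequivalent.

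The main obstacle is the middle step: proving that the recursion forced by the mapping equation admits no solutions beyond these three families, and pinning down the closed-form rational expressions. The bookkeeping of coefficient relations grows rapidly, and the delicate point is to show that no further continuous or discrete branches emerge at higher order, i.e.\ that the finitely many critical orders already exhaust all constraints; establishing a sharp enough degree bound to justify the rational Ansatz is the crux. By comparison, the rigidity step is more routine, though it still requires a careful and explicit description of the residual isotropies that preserve the normalization.
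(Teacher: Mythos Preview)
The present paper does not prove this theorem at all: it is quoted verbatim as \cite[Theorem 4.1]{reiter2} in the Preliminaries section and used as a black box throughout. Consequently there is no ``paper's own proof'' to compare your proposal against. Your outline is a plausible sketch of how the classification in \cite{reiter2} is carried out (mapping equation, coefficient recursion from the normalized jet, reduction to a finite algebraic system, rational Ansatz, verification), but it remains only a sketch; in particular your acknowledged ``main obstacle''---showing that no further branches arise and that the degree bound justifying the rational Ansatz is sharp---is precisely the substance of the proof in \cite{reiter2}, and you have not indicated how you would overcome it.

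One point of contact with the present paper: your rigidity argument (determining the residual subgroup of $\Isotropies$ preserving the normalization and checking that it acts trivially on the modulus $s$) is essentially the computation carried out here in \autoref{lemma:StabilizerN2}. That lemma shows that for $s>0$ the stabilizer is trivial, which gives the inequivalence of distinct $\MapOneParameterFamilyThree{k}{s}{\eps}$ within a fixed family $k$; the inequivalence across families and at $s=0$ requires the additional information that the normalization of \autoref{proposition:NormalForm2Nondeg} already separates orbits, as asserted in \autoref{theorem:ReductionOneParameterFamilies2} and proved in \cite{reiter2}.
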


For $\eps = -1$ we can give the following picture of $\NTwo$ in the $s$-parameter space according to \autoref{theorem:ReductionOneParameterFamilies2}, see \cite[\S 4]{reiter2} for more details: 
\begin{figure}[H]
\begin{center}
\begin{tikzpicture}
		\node (0) at (-1.75, -1) {};
		\node at (-2.1,-1) {$\MapOneParameterFamilyThree{2}{0}{-}$};
		\node  (1) at (1.4, 1.25) {};
		\node at (0,-0.7) {$\MapOneParameterFamilyThree{2}{\frac 1 2}{-}$};
		\node  (2) at (1.75, 1.75) {};
		\node  (3) at (-1.50, 1.25) {};
		\node at (1.1,0.35) {$\MapOneParameterFamilyThree{2}{1}{-}$};
		\node  (4) at (1.75, -1) {};
		\node at (2.2,-1) {$\MapOneParameterFamilyThree{3}{0}{-}$};
		\node  (5) at (-1.75, 1.75) {};
		\draw [bend right=15,line width= 1pt] (0.center) to (1.center);
		\draw [bend right=1,looseness=1,dotted,line width= 1pt] (1.center) to (2.center);
		\draw [bend left, looseness=0.75,line width= 1pt] (4.center) to (3.center);
		\draw [bend left=3,dotted,line width= 1pt](3.center) to (5.center);
		\fill (-2,0) circle (1.25pt) node[above] {$\MapOneParameterFamilyTwo{1}{-}$};		
		\node (6) at (-9, -1) {};
		\node at (-9.4,-1) {$\MapOneParameterFamilyThree{2}{0}{+}$};
		\node  (7) at (-7, 1) {};
		\node  (8) at (-6.5, 1.5) {};
		\node  (9) at (-8, -1) {};
		\node at (-8.3,-1.1) {$\MapOneParameterFamilyThree{3}{0}{+}$};
		\node  (10) at (-6, 1) {};
		\node (11) at (-5.5,1.5){};
		\draw [line width= 1pt] (6.center) to (7.center);
		\draw [dotted,line width= 1pt] (7.center) to (8.center);
		\draw [line width= 1pt] (9.center) to (10.center);
		\draw [dotted,line width= 1pt](10.center) to (11.center);
		\fill (-9,0) circle (1.25pt) node[above] {$\MapOneParameterFamilyTwo{1}{+}$};
\end{tikzpicture}
\caption{$\NTwo$ for $\eps = \pm1$ in the parameter space} 
\label{figure:orbits}
\end{center}
\end{figure} 

An immediate consequence of the normalization and classification of maps in $\FTwo$ is the following jet determination result.
 
\begin{corollary}[label=cor:jetDetermination,name={\cite[Corollary 4.8]{reiter2}}]
Let $U \subset \C^2$ be a neighborhood of $0$ and $H: U \rightarrow \C^3$ a holomorphic mapping. We denote the components of $H$ by $H=(f,g)=(f_1,f_2,g)$ and write $j_0(H) \coloneqq \{j_0^2(H), f_{z^2 w}(0)\}$. If for $H_1,H_2 \in \FTwo$ the coefficients belonging to $j_0(H_1)$ and $j_0(H_2)$ coincide, we have $H_1 \equiv H_2$.
\end{corollary}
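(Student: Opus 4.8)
The plan is to prove the statement as an injectivity assertion for the assignment $H \mapsto j_0(H)$ on $\FTwo$, by combining the normalization of \autoref{proposition:NormalForm2Nondeg} with the classification of \autoref{theorem:ReductionOneParameterFamilies2}. Given $H_1,H_2 \in \FTwo$ whose jet data $j_0(H_1)=\{j_0^2(H_1),f_{z^2w}(0)\}$ and $j_0(H_2)$ coincide, I would produce normalizing isotropies that are \emph{literally the same} for the two maps, push both into $\NTwo$, identify the resulting normal forms using the classification, and then undo the normalization to conclude $H_1=H_2$.

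First I would inspect the construction of the normalizing pair $(\sigma,\sigma') \in \Isotropies$ with $\widehat H = \sigma' \circ H \circ \sigma^{-1} \in \NTwo$ from the proof of \autoref{proposition:NormalForm2Nondeg} and check that the standard parameters $(\gamma,\gamma') \in \Gamma \times \Gamma'$ solving the conditions (i)--(vii) there are expressed through coefficients of $H$ of order at most those retained in $j_0(H)$. Since the conditions are graded by increasing jet order, this should be a triangular system: conditions (i)--(ii) fix the parameters governing the linear part of the isotropies from $j_0^2(H)$, conditions (iii)--(v) fix the quadratic ones, again from $j_0^2(H)$, while (vi)--(vii), involving $\re(\widehat g_{w^2}(0))$ and $\re(\widehat f_{2z^2w}(0))$, use exactly the extra coefficient $f_{z^2w}(0)$. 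Fixing once and for all a choice of branch wherever the solution is not unique (i.e. where the normal form has a nontrivial stabilizer), the normalizing isotropies become an explicit function of $j_0(H)$ alone, so that $j_0(H_1)=j_0(H_2)$ yields the same pair $(\sigma,\sigma')$ for both maps.

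Applying this common pair gives $\widehat H_i \coloneqq \sigma' \circ H_i \circ \sigma^{-1} \in \NTwo$. By \autoref{remark:SummaryJetNormalizedMapping} the only free coefficients in the jet of a normalized mapping are $|f_{1w^2}(0)|$, $f_{2w^2}(0)$ and $\im(f_{2z^2w}(0))$; the first two are of second order and are determined by $j_0^2(H)$ and $(\sigma,\sigma')$, while the last is determined by $f_{z^2w}(0)$ and $(\sigma,\sigma')$, once one knows that the further third-order coefficients of $H$ entering the composition are themselves pinned down by $j_0(H)$. Granting this, $j_0(\widehat H_1)$ and $j_0(\widehat H_2)$ agree, and I would read off from the explicit formulas for $\MapOneParameterFamilyTwo{1}{\eps}$, $\MapOneParameterFamilyThree{2}{s}{\eps}$ and $\MapOneParameterFamilyThree{3}{s}{\eps}$ in \autoref{theorem:ReductionOneParameterFamilies2} that the family index and the parameter $s \geq 0$ of a normal form are recovered injectively from precisely these free jet coefficients. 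Hence $\widehat H_1 = \widehat H_2$, and inverting the normalization gives $H_1 = (\sigma')^{-1}\circ \widehat H_1 \circ \sigma = (\sigma')^{-1}\circ \widehat H_2 \circ \sigma = H_2$.

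The main obstacle is the bookkeeping behind the two jet-level claims, namely that the entire third-order jet of $H \in \FTwo$ relevant to the composition is controlled by $j_0(H)$, so that the coefficient $\im(f_{2z^2w}(0))$ of $\widehat H$ is genuinely a function of $j_0(H)$ and $(\sigma,\sigma')$ and not of uncontrolled higher coefficients of $H$. I expect this to follow from the defining mapping identity of $\FTwo$, that $H$ sends $\HeisenbergOne{2}$ into $\HeisenbergTwo{\eps}{3}$ (see \autoref{def:F2}), whose coefficient comparison expresses the remaining third-order coefficients of a transversal $2$-nondegenerate map in terms of the retained data; combined with the triangular solvability of the normalization equations, this closes the argument, after which the injectivity of the jet across the three families in \autoref{theorem:ReductionOneParameterFamilies2} reduces to a short explicit computation.
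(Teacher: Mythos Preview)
Your approach is essentially the one the paper points to: the corollary is imported from \cite[Corollary 4.8]{reiter2} and is described here only as ``an immediate consequence of the normalization and classification of maps in $\FTwo$'', i.e.\ \autoref{proposition:NormalForm2Nondeg} together with \autoref{theorem:ReductionOneParameterFamilies2}, which is precisely the two-step reduction you outline.

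One caution on the point you yourself flag as the main obstacle. Your claim that ``the entire third-order jet of $H\in\FTwo$ relevant to the composition is controlled by $j_0(H)$'' via the mapping identity is too strong as stated: a raw count of third-order coefficients versus third-order relations coming from $\im g = |f_1|^2+\eps|f_2|^2$ on $\HeisenbergOne{2}$ shows the mapping equation does \emph{not} pin down every third-order coefficient of a general $H\in\FTwo$ from $j_0(H)$. What actually makes the argument close is more specific: once the isotropies are fixed by conditions (i)--(vi) (so in particular $c=0$, $c_1'=c_2'=0$, $a_2'=0$, $\lambda=\lambda'=1$), the linear part of $\sigma^{-1}$ is diagonal in $(z,w)$ and the only third-order derivatives of $H$ that feed into $\widehat f_{z^2w}(0)$ are $f_{z^2w}(0)$ and $g_{z^2w}(0)$; the latter is then expressed through $j_0^2(H)$ by the mapping equation. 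This is exactly the computation visible in the proofs of \autoref{lemma:StabilizerN2} and \autoref{lemma:SequenceN2Action} (see \eqref{eq:freenessThirdOrderZ2W} and \eqref{eq:sequenceThirdOrder}), where only $s$ and $y=\im(f_{2z^2w}(0))$ from the third order enter. With that refinement, your steps (normalize with isotropies depending only on $j_0(H)$; read off family and parameter from $j_0(\widehat H)$ via \autoref{theorem:ReductionOneParameterFamilies2}; invert) go through.
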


\subsection{Associated Topologies}

We deal with the following topologies, see e.g. \cite{BER97}.

\begin{definition}[label=def:inductiveLimitTop]
For $K \subset \C^N$ a compact neighborhood of $p \in \C^N$ we denote the Frech\'et space $\mathcal H_K(p;p')$ of germs of holomorphic mappings, defined in a neighborhood of $K$, which map $p \in \C^N$ to $p' \in \C^{N'}$. The topology for $\mathcal H_K(p;p')$ is given by uniform convergence on compact sets. We equip $\mathcal H(p;p')$ with the inductive limit topology, denoted by $\tau_C$, with respect to Frech\'et spaces $\mathcal H_K(p;p')$, where $K$ is some compact neighborhood of $p$ in $\C^N$. Then for $H,H_n \in \mathcal H(p;p')$ we say that $H_n$ converges to $H$, if there exists $K \subset \C^N$ a compact neighborhood of $p$, such that each $H_n$ is holomorphic in a neighborhood of $K$ and $H_n$ converges uniformly to $H$ on $K$. \\
For $\mathcal H(M,p;M',p') \subset \mathcal H(p;p')$ we consider the induced topology of $\mathcal H(p;p')$ denoted by $\tau_C$.
\end{definition}

\begin{definition}
Let $H:\C^N \rightarrow \C^{N'}$ be a holomorphic mapping defined at $p \in \C^N$ and $\alpha \in \N^N$. We denote by $j^k_pH$ the \textit{$k$-jet of $H$ at $p$} defined as
\begin{align*}
j_p^k H \coloneqq \left( \frac{\partial^{|\alpha|} H} {\partial Z^{\alpha} } (p): |\alpha| \leq k\right). 
\end{align*}
We denote by $J^k_{p,p'}$ the collection of all $k$-jets at $p$ of germs of mappings from $(\C^N,p)$ to $(\C^{N'},p')$. We set $J^k_p \coloneqq J^k_{p,p}$. Let $(M,p) \subset (\C^N,p)$ and $(M',p') \subset (\C^{N'},p')$ be germs of submanifolds. For $k\in\N$ we denote by $J^k_q(M,p;M',p')$ the \textit{space of $k$-jets of $\mathcal H(M,p;M',p')$ at $q$}. We write $J_q^k(M,p) \coloneqq J^k_q(M,p;M,p)$ and $J_0^k(M;M') \coloneqq J^k_0(M,0;M',0)$. We denote by $G^k_p(M,p) \subset J^k_p(M,p)$ the \textit{space of $k$-jets of $\Aut_p(M,p)$ at $p$}.
\end{definition}

\begin{remark}
Note that $J^k_p(M,p;M',p') \subset J^k_{p,p'}$. We identify $J^k_{p,p'}$ with the space of germs of holomorphic polynomial mappings from $\C^N$ to $\C^{N'}$ up to degree $k$, which map $p \in \C^N$ to $p' \in \C^{N'}$. Thus $J^k_{p,p'}$ can be identified with some $\C^K$, where $K \coloneqq N' \binom{N+ k}{N}$, such that the topology for $J^k_{p,p'}$, denoted by $\tau_J$, is induced by the natural topology of $\C^K$. We refer to the topology $\tau_J$ as \textit{topology of the jet space}.
\end{remark}

\begin{definition}
We say $\mathcal K \subset \mathcal H(M,p;M',p')$ admits a \textit{jet parametrization for $\mathcal K$ of order $k$} if the following properties hold: There exists a mapping $\Psi: \C^N \times \C^K \supset U \rightarrow \C^{N'}$, with $K = N' \binom{N+ k}{N}$ from above and $U$ an open neighborhood of $\{p\} \times J_p^k(M,p;M',p')$, which is holomorphic in the first $N$ variables, real-analytic in the remaining $K$ variables, such that $F(Z) = \Psi(Z, j_p^k F)$, for all $F \in \mathcal K$.
\end{definition}

\begin{remark}
If $\mathcal K \subset \mathcal H(M,p;M',p')$ admits a jet parametrization of some order $k$, then $\tau_C = \tau_J$, which follows from the real-analyticity in the last $K$ variables.
\end{remark}

\begin{remark}[label=rem:TCequalsTJ]
 Based on \cite[Proposition 25, Corollary 26--27]{lamel} we obtain a jet parametrization of order $4$ for $\mathcal K = \FTwo$ in \cite[Lemma 4.3]{reiter2} and by \autoref{cor:jetDetermination} we have that $K = K_0 \coloneqq 15$. Using \autoref{theorem:ReductionOneParameterFamilies2} and the notation from \autoref{cor:jetDetermination} we identify $\FTwo$ with a subset $\JetFTwo \subset \C^{K_0}$ given by $\JetFTwo \coloneqq \{j_0(H): H\in \FTwo\}$ and the topology we use in the sequel for $\FTwo$ is $\tau_J$. 
\end{remark}

\begin{definition}[label=def:quotientTop]
Let $X$ be topological spaces, $Y$ a set and $q: X \rightarrow Y$ a surjective mapping. We call the topology on $Y$ induced by $q$ the \textit{quotient topology $\tau_Q$ on $Y$}, where a set $U \subset Y$ is open in $Y$ if $q^{-1}(U)$ is open in $X$. 
\end{definition}

\section{Homeomorphic Variations of Normal Forms}
\label{sec:HomeomorphicNormalForm}

\begin{definition}[label=def:NormalForm]
Let $\mathcal H$ be a subset of $\mathcal H(M,p;M',p')$. A proper subset $\mathcal K \subsetneq \mathcal H$ is called \textit{normal form for $\mathcal H$}, if for each $[F] \in \mathcal H /_{\sim}$, there exists a unique representative $G \in \mathcal K \cap [F]$. We denote the mapping which assigns to each $H \in \mathcal H$ the representative $G \in \mathcal K \cap [H]$ as $\pi: \mathcal H \rightarrow \mathcal K$. A normal form $\mathcal K$ for $\mathcal H$ is called \textit{admissible} if $\pi: \mathcal H \rightarrow \mathcal K$ is continuous.
\end{definition}

\begin{remark}
The uniqueness of the representative $F\in \mathcal K \cap [F]$ in \autoref{def:NormalForm} is not a restriction: Assume we have another representative $F\neq G \in \mathcal K$ in the class $[F]$, then $G$ is equivalent to $F$, hence it suffices to choose only one element from the set of all representatives which belong to $\mathcal K \cap [F]$. If there exists an admissible normal form $\mathcal K$ for $\mathcal H$ we observe that in each orbit of any not necessarily admissible normal form $\mathcal K'$ for $\mathcal H$, there exists an element of $\mathcal K \cap \mathcal K'$.
\end{remark}

\begin{theorem}[label=thm:differentNormalForm]
Let $\mathcal N'$ be an admissible normal form for $\FTwo$. Then $\mathcal N'$ is homeomorphic to $\NTwo$, where we equip $\mathcal N'$ and $\NTwo$ with $\tau_J$.
\end{theorem}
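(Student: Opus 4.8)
The plan is to identify both $\NTwo$ and $\mathcal N'$ with the set of equivalence classes $\FTwo/_{\sim}$ through their respective projections and thereby produce the homeomorphism directly. Write $\pi_{\NTwo}: \FTwo \to \NTwo$ for the normalization projection attached to $\NTwo$ and $\pi_{\mathcal N'}: \FTwo \to \mathcal N'$ for the projection attached to $\mathcal N'$; by \autoref{def:NormalForm} each sends $H \in \FTwo$ to the unique representative of its class $[H] \in \FTwo/_{\sim}$ lying in $\NTwo$, respectively in $\mathcal N'$. Since $\mathcal N'$ is admissible by hypothesis, $\pi_{\mathcal N'}$ is continuous. First I would set
\[
\Phi \coloneqq \pi_{\mathcal N'}\big|_{\NTwo}: \NTwo \to \mathcal N', \qquad \Psi \coloneqq \pi_{\NTwo}\big|_{\mathcal N'}: \mathcal N' \to \NTwo,
\]
and check that $\Phi$ and $\Psi$ are mutually inverse bijections. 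This is a formal consequence of the defining property of a normal form: for $G \in \NTwo$ the element $\pi_{\mathcal N'}(G)$ is the unique $\mathcal N'$-representative of $[G]$, and applying $\pi_{\NTwo}$ returns the unique $\NTwo$-representative of the class $[\pi_{\mathcal N'}(G)] = [G]$, which is $G$ itself since $G \in \NTwo$; hence $\Psi \circ \Phi = \id_{\NTwo}$, and symmetrically $\Phi \circ \Psi = \id_{\mathcal N'}$.

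Both $\NTwo$ and $\mathcal N'$ carry $\tau_J$ as subspaces of $\FTwo$, so $\Phi$ is continuous, being the restriction to the subspace $\NTwo$ of the continuous map $\pi_{\mathcal N'}$. To conclude that $\Phi$ is a homeomorphism it then remains to show that $\Psi$ is continuous, and for this it suffices to prove that $\NTwo$ is \emph{itself} an admissible normal form, i.e.\ that $\pi_{\NTwo}$ is continuous with respect to $\tau_J$; then $\Psi$ is continuous as a restriction and the theorem follows.

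The crux is therefore the admissibility of $\NTwo$. To establish it I would work in the jet picture of \autoref{rem:TCequalsTJ}, identifying $\FTwo$ with $\JetFTwo \subset \C^{K_0}$ and recalling that $\tau_J = \tau_C$ there. By \autoref{proposition:NormalForm2Nondeg} each $H \in \FTwo$ is carried to $\NTwo$ by isotropies $(\sigma_{\gamma}, \sigma'_{\gamma'}) \in \Isotropies$, and the normalization conditions (i)--(vii) of that proposition are analytic equations in the entries of $j_0(H)$; solving them expresses an admissible choice of the standard parameters $(\gamma,\gamma') \in \Gamma \times \Gamma'$ as a continuous function of $j_0(H)$. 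Since the action $(\gamma,\gamma',H) \mapsto \sigma'_{\gamma'} \circ H \circ \sigma_{\gamma}^{-1}$ is given by the explicit rational formulas \eqref{eq:sigma} and \eqref{eq:tau}, and is therefore continuous in all of its arguments, the composite $\pi_{\NTwo}(H) = \sigma'_{\gamma'} \circ H \circ \sigma_{\gamma}^{-1}$ depends continuously on $H$, which is the required admissibility.

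The main obstacle I anticipate lies precisely in this last step: the normalizing parameters are in general determined only up to a residual finite ambiguity, and along the loci where the one-parameter families of \autoref{theorem:ReductionOneParameterFamilies2} meet or degenerate --- visible as the incidences in \autoref{figure:orbits} --- one must verify that the invariants recovered from $j_0(H)$ (in particular the parameter $s$ and the index distinguishing $\MapOneParameterFamilyTwo{1}{\eps}$, $\MapOneParameterFamilyThree{2}{s}{\eps}$ and $\MapOneParameterFamilyThree{3}{s}{\eps}$) vary continuously, and that any discontinuity in the choice of $(\gamma,\gamma')$ does not propagate to $\pi_{\NTwo}(H)$. Once continuity of $\pi_{\NTwo}$ is secured, $\Phi$ is a continuous bijection with continuous inverse $\Psi$, hence a homeomorphism; the same argument in fact shows that any two admissible normal forms for $\FTwo$ are mutually homeomorphic.
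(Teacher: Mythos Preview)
Your proposal is correct and follows essentially the same route as the paper: both define the bijection between $\mathcal N'$ and $\NTwo$ as the restriction of one projection composed with the inclusion of the other, and both reduce the continuity of the inverse to showing that $\NTwo$ is itself admissible. Your anticipated obstacle is unnecessary, however: the admissibility of $\NTwo$ does not require analyzing the incidences in \autoref{figure:orbits} or the classification of \autoref{theorem:ReductionOneParameterFamilies2} at all, since the proof of \autoref{proposition:NormalForm2Nondeg} in \cite{reiter2} already produces normalizing isotropies depending real-analytically on $j_0^3(H)$, which is exactly what the paper invokes.
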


\begin{proof}
Let us denote by $\pi': \FTwo \rightarrow \mathcal N'$ the continuous mapping as in \autoref{def:NormalForm}.  We note that the class $\NTwo$ introduced in \autoref{proposition:NormalForm2Nondeg} is an admissible normal form for $\FTwo$ as in \autoref{def:F2}. To see this we need to inspect the proof of \autoref{proposition:NormalForm2Nondeg} given in \cite[Proposition 3.1]{reiter2}. If we equip $\FTwo$ with $\tau_J$ we obtain that for the isotropies $(\sigma,\sigma') \in \Isotropies$ deduced in the proof, the mapping $\pi: \FTwo \rightarrow \NTwo, H \mapsto \sigma' \circ H \circ \sigma^{-1}$ iis continuous, since the isotropies depend real-analytically on $j_0^3(H)$. Hence we have the following diagram:
\begin{figure}[H]
\begin{tikzpicture}[node distance=1.5cm]
  \node (A) {$\FTwo$};
  \node (B) [below of = A, left of=A] {$\mathcal N'$};
  \node (C) [below of = A, right of=A] {$\NTwo$};
  \draw[->] (A.210) to node[above=0.2cm] {$\pi'$} (B.60);
   \draw[->] (B.30) to node[below=0.2cm] {$\incl'$} (A.240);
  \draw[->] (A.330) to node [above=0.2cm] {$\pi$} (C.120);
   \draw[->] (C.150) to node[below=0.2cm] {$\incl$} (A.300);
 \draw[->] (B) to node [below=0.1cm] {$\psi$} (C);
\end{tikzpicture}
\caption{Diagram for admissible normal forms}
\end{figure}

The mapping $\incl':\mathcal N' \rightarrow \FTwo$ is the inclusion mapping, which is given by $\incl'(H) \coloneqq H$ for all $H\in \mathcal N'$ and similar for $\incl: \NTwo \rightarrow \FTwo$. The map $\psi:\mathcal N' \rightarrow \NTwo$ is given by $\psi(H) \coloneqq F$ for $H\in \mathcal N'$ and $F \in \NTwo \cap [H]$.
Since $\mathcal N'$ and $\NTwo$ are normal forms, we obtain that $\psi$ is a bijective mapping. Further since $\psi = \pi \circ \incl'$ and $\psi^{-1} = \pi' \circ \incl$ are compositions of continuous mappings, we obtain that $\psi$ is a homeomorphism.
\end{proof}

\begin{example}[label=ex:differentNormalForm]
Starting with $\NTwo$ we can construct different admissible normal forms $\mathcal N'$ as follows: We fix a pair of isotropies $(\phi_0,\phi_0') \in \Isotropies$ and consider the isotropies $(\hat \phi,\hat \phi') \in \Isotropies$ from \autoref{proposition:NormalForm2Nondeg}, such that $\pi:\FTwo \rightarrow \NTwo$ is given by $\pi(H) \coloneqq \hat \phi' \circ H \circ {\hat \phi}^{-1}$, denoted by $\widehat H$. We define $\phi \coloneqq \phi_0 \circ \hat\phi$ and $\phi' \coloneqq \phi'_0 \circ {\hat \phi}'$, to obtain for $F\in \FTwo$,
\begin{align*}
 \phi' \circ F \circ \phi^{-1} = \phi'_0 \circ {\hat \phi'} \circ F \circ {\hat \phi}^{-1} \circ \phi_0^{-1} = \phi'_0 \circ \widehat F \circ \phi_0^{-1},
\end{align*}
where $\widehat F \in \NTwo$. We define $\mathcal N' \coloneqq \bigl\{ \phi'_0 \circ \widehat F \circ \phi_0^{-1}:  \widehat F \in \NTwo \bigr\}$. Since $\hat \phi$ and ${ \hat \phi}'$ depend continuously on $F \in \FTwo$, the mapping $\pi': \FTwo \rightarrow \mathcal N'$ given by $\pi'(F) \coloneqq \phi' \circ F \circ \phi^{-1}$ is continuous, which means that $\mathcal N'$ is an admissible normal form. 
\end{example}

\section{A Topological Property of the Quotient Space of \texorpdfstring{$\FTwo$}{F2}}
\label{section:BasicTopProps}

\begin{lemma}[label=lemma:basicTopProps1]
The class $\FTwo$ consists of $\frac{5 + \eps} 2$ connected components.
\end{lemma}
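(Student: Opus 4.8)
The plan is to reduce the count to the normal form $\NTwo$ and then read it off from \autoref{theorem:ReductionOneParameterFamilies2}. First I would show that $\FTwo$ and $\NTwo$, both carrying $\tau_J$, have the same number of connected components. From the proof of \autoref{thm:differentNormalForm} the normalization $\pi\colon\FTwo\to\NTwo$, $H\mapsto\hat\phi'\circ H\circ\hat\phi^{-1}$, is a continuous surjection, the inclusion $\incl\colon\NTwo\to\FTwo$ is continuous with $\pi\circ\incl=\id$, and $(\hat\phi,\hat\phi')\in\Isotropies$ depends continuously on $H$. Since $\hat\phi'$ preserves the positivity $g_w(0)>0$ that defines $\FTwo$, it has $\sigma=+1$ in the notation of \autoref{def:isotropies}, so $(\hat\phi,\hat\phi')$ lies in the identity component $\Isotropies_0$, which is path-connected because its parameter space is a product of connected sets. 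Acting by a path in $\Isotropies_0$ from $(\id,\id)$ to $(\hat\phi,\hat\phi')$ then joins $H$ to $\pi(H)=\incl(\pi(H))$ inside $\FTwo$; hence the maps induced by $\pi$ and $\incl$ on sets of connected components are mutually inverse, and $\FTwo$ has as many components as $\NTwo$.

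Next I would locate the pieces of $\NTwo$ in the jet space. By \autoref{theorem:ReductionOneParameterFamilies2}, $\NTwo$ is the union of the point $\MapOneParameterFamilyTwo{1}{\eps}$ and the two curves $\{\MapOneParameterFamilyThree{2}{s}{\eps}:s\geq0\}$ and $\{\MapOneParameterFamilyThree{3}{s}{\eps}:s\geq0\}$, each connected as the continuous image of an interval. By \autoref{remark:SummaryJetNormalizedMapping} all third-order data of a normalized map are fixed except the triple $(f_{1w^2}(0),f_{2w^2}(0),\im(f_{2z^2w}(0)))$, whose entries are continuous functions of the $3$-jet. Expanding the rational normal forms I expect the values $(0,0,0)$ for $\MapOneParameterFamilyTwo{1}{\eps}$, the values $(s/2,\,s^2/2,\,(\eps+s^2)/2)$ for $\MapOneParameterFamilyThree{2}{s}{\eps}$, and $(s/2,\,-\eps/8,\,3\eps/8)$ for $\MapOneParameterFamilyThree{3}{s}{\eps}$, with $\im(f_{2w^2}(0))=0$ throughout.

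The counting then follows. For $\eps=+1$ the invariant $f_{2w^2}(0)$ is $\geq0$ on $\MapOneParameterFamilyTwo{1}{\eps}$ and on the $\MapOneParameterFamilyThree{2}{s}{\eps}$-curve but equals $-\tfrac18<0$ on the $\MapOneParameterFamilyThree{3}{s}{\eps}$-curve, while $\im(f_{2z^2w}(0))$ equals $0$ on $\MapOneParameterFamilyTwo{1}{\eps}$ and is $\geq\tfrac12$ on the $\MapOneParameterFamilyThree{2}{s}{\eps}$-curve; these continuous invariants exhibit the three pieces as mutually clopen, giving $3=\tfrac{5+\eps}2$ components. For $\eps=-1$ the two curves meet, since at $s=\tfrac12$ both realise the triple $(\tfrac14,\tfrac18,-\tfrac38)$ and hence, by \autoref{cor:jetDetermination}, the single map $\MapOneParameterFamilyThree{2}{1/2}{-}=\MapOneParameterFamilyThree{3}{1/2}{-}$, so their union is connected. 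As $\MapOneParameterFamilyTwo{1}{\eps}$ has invariants $(0,0,0)$, different from those at every point of both curves, it remains an isolated clopen point, leaving $2=\tfrac{5+\eps}2$ components.

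The main obstacle I anticipate is the jet bookkeeping in the middle step: correctly extracting the relevant second- and third-order coefficients of $\MapOneParameterFamilyThree{2}{s}{\eps}$ and $\MapOneParameterFamilyThree{3}{s}{\eps}$, and then verifying that, apart from the single coincidence at $s=\tfrac12$ when $\eps=-1$, the three pieces stay pairwise separated, so that no further limit point merges additional components. A secondary subtlety is the reduction step, where one must ensure the normalizing isotropies lie in the path-connected identity component $\Isotropies_0$ rather than in the extra component with $\sigma=-1$ that is present when $\eps=-1$.
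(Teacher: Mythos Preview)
Your approach is correct and follows the same overall strategy as the paper: reduce the component count of $\FTwo$ to that of $\NTwo$ via the continuous normalization map $\pi$, use that the relevant isotropies lie in the identity component of $\Isotropies$ (so each $H$ is path-connected to $\pi(H)$ inside $\FTwo$), and then count the components of $\NTwo$. Your observation about $\sigma=+1$ is exactly the point the paper makes when asserting that $\Gamma\times\Gamma'$ is path-connected.

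The main difference is in how the component structure of $\NTwo$ is established. The paper simply asserts it, relying on \cite{reiter2} and \autoref{figure:orbits}: for $\eps=-1$ it states that $\widehat\NTwo=C_2\cup C_3$ gives a path-connected $\widehat\FTwo$ and that $\NTwo$ has two components, while for $\eps=+1$ it states $\NTwo$ has three. You instead recover these facts directly by extracting the free invariants $(|f_{1w^2}(0)|,f_{2w^2}(0),\im f_{2z^2w}(0))$ from the explicit rational formulas and separating or merging the pieces accordingly. Your computation that $\MapOneParameterFamilyThree{2}{1/2}{-}$ and $\MapOneParameterFamilyThree{3}{1/2}{-}$ share the triple $(\tfrac14,\tfrac18,-\tfrac38)$, hence coincide by \autoref{remark:SummaryJetNormalizedMapping} and \autoref{cor:jetDetermination}, is consistent with what the paper uses later (in the proof of \autoref{theorem:realAnalyticStructureTop}, where $N_1\cap N_2=\{\MapOneParameterFamilyThree{2}{1/2}{-}\}$). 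So your argument is more self-contained at the cost of some bookkeeping, whereas the paper's is shorter but leans on the cited classification.

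One small point to tighten: for $\eps=-1$, saying that $\MapOneParameterFamilyTwo{1}{-}$ has invariants $(0,0,0)$ \emph{different from every point} of the two curves only gives disjointness, not isolation. You should note explicitly that no sequence on either curve approaches $(0,0,0)$; for instance, $(f_{2w^2}(0))^2+(\im f_{2z^2w}(0))^2$ is bounded below by a positive constant on $C_2\cup C_3$ (its minimum on $\MapOneParameterFamilyThree{2}{s}{-}$ occurs at $s^2=\tfrac12$ and is $\tfrac18$, while on $\MapOneParameterFamilyThree{3}{s}{-}$ it equals $\tfrac{10}{64}$), so $\{\MapOneParameterFamilyTwo{1}{-}\}$ is indeed clopen. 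You already flagged this as the anticipated obstacle; once it is made explicit, the argument is complete.
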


\begin{proof}
We denote by $\pi: \FTwo \rightarrow \NTwo$ the continuous map, which takes $F \in \FTwo$ to $\widehat F \in \NTwo$ according to \autoref{proposition:NormalForm2Nondeg}. For $k=2,3$ we set $C_k \coloneqq \{G_{k,s}^{\eps}: s\geq 0\}$ and $\widehat \NTwo \coloneqq C_2 \cup C_3$. The space of standard parameters $\Gamma \times \Gamma'$ is path-connected, since for mappings in $\FTwo$ and $\eps = -1$ we only consider isotropies as in \eqref{eq:tau} with $\sigma = +1$. Thus for any $H \in \NTwo$ the isotropic orbit $O_0(H)$ is path-connected. First we treat the case $\eps = -1$. We observe that $\widehat \FTwo = \bigcup_{H \in \widehat N} O_0(H)$ is path-connected. If $\FTwo$ would be connected then $\pi(\FTwo) = \NTwo$ is connected, which is not the possible, since $\NTwo$ consists of $2$ connected components $\MapOneParameterFamilyTwo{1}{-}$ and $\widehat \NTwo$. Thus $\FTwo$ has $2$ connected components $O_0(\MapOneParameterFamilyTwo{1}{-})$ and $\widehat \FTwo$. For $\eps = +1$ we note that the set $O_0(C_k) \coloneqq \bigcup_{H \in C_k} O_0(H)$ for $k=2,3$ is path-connected and $\NTwo$ consists of $3$ connected components. Thus $\FTwo$ admits at most $3$ connected components. $\FTwo$ is not connected since then $\pi(\FTwo) = \NTwo$ would be connected. If $\FTwo$ consists of $2$ connected components $\FTwo_1, \FTwo_2$, such that $\FTwo = \FTwo_1 \cup \FTwo_2$, we need to distinguish several cases. Either $\FTwo_1 = O_0(\MapOneParameterFamilyTwo{1}{+}) \cup O_0(C_k)$, and $\FTwo_2=O_0(C_{\ell})$, where $k \neq \ell$ and $k,\ell \in \{2,3\}$ or $\FTwo_1 = O_0(C_2) \cup O_0(C_3)$ and $\FTwo_2 = O_0(\MapOneParameterFamilyTwo{1}{+})$. In all cases we have by the continuity of $\pi$, that $\pi(\FTwo_1)$ is connected, which is not possible.
\end{proof}

\begin{proof}[Proof of \autoref{theorem:basicTopProps2}]
The set $X \coloneqq \FTwo/_{\approx}$ consists of elements denoted by $[F]$ for $F\in \FTwo$. We equip $X$ with the quotient topology such that the canonical projection $\pi: \FTwo \rightarrow X$ is continuous. For $\eps = +1$ we have $X =\{\MapOneParameterFamilyTwo{1}{+}, \MapOneParameterFamilyThree{2}{0}{+}, \MapOneParameterFamilyThree{3}{0}{+} \}$ by our classification. By \autoref{lemma:basicTopProps1} we obtain that $\pi^{-1}(H)$ for $H \in X$ is a connected component of $\FTwo$, hence open. Thus $X$ carries the discrete topology. To prove the statement for $\eps = -1$ we write $H_0 \coloneqq \MapOneParameterFamilyThree{2}{1/2}{-} \in \NTwo$ and $H_1 \coloneqq \MapOneParameterFamilyThree{3}{0}{-} \in \NTwo$. For $k=0,1$ let $U_k \in X$ be an open neighborhood of $[H_k]$, then $V_k \coloneqq \pi^{-1}(U_k)$ is an open neighborhood of the orbit of $H_k$ in $\FTwo$. According to our classificiation there exists a sequence $(G_n)_{n\in \N}$ of mappings in $\FTwo$, where each $G_n \in [H_1]$ and $G_n \rightarrow H_0$ in $\FTwo$ as $n \rightarrow \infty$. Thus there exists $N\in \N$ such that $G_n \in V_0 \cap V_1$ for all $n \geq N$, which shows $[H_1] \in U_0 \cap U_1$ and completes the proof.
\end{proof}

\section{Isotropic Stabilizer}

\begin{lemma}[label=lemma:StabilizerN2]
We set $\FreeNTwo \coloneqq \NTwo \setminus \{\MapOneParameterFamilyTwo{1}{\eps}, \MapOneParameterFamilyThree{2}{0}{\eps}, \MapOneParameterFamilyThree{3}{0}{\eps}\}$ and $\FreeFTwo \coloneqq \bigcup_{H \in \FreeNTwo} O_0(H)$. The isotropic stabilizer $\stab_0(H)\coloneqq \{(\phi,\phi') \in \Isotropies: \phi' \circ H \circ \phi^{-1} = H\}$ of $H$ is trivial for $H\in \FreeNTwo$. Furthermore we have $\stab_0(\MapOneParameterFamilyTwo{1}{\eps}) = \stab_0(\MapOneParameterFamilyThree{2}{0}{\eps})$ is homeomorphic to $\Sphere{1}$ and $\stab_0(\MapOneParameterFamilyThree{3}{0}{\eps})$ is homeomorphic to $\Z_2$.
\end{lemma}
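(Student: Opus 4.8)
\emph{The plan} is to convert the algebraic stabilizer condition into a finite jet equation and then to exhibit and classify all solutions explicitly. Two tools drive the reduction: by \autoref{cor:jetDetermination} a map in $\FTwo$ is uniquely determined by the finite data $j_0(H)=\{j_0^2(H),f_{z^2w}(0)\}$, and by \autoref{theorem:ReductionOneParameterFamilies2} no two distinct maps of $\NTwo$ are isotropically equivalent. For $H\in\NTwo$ and $(\sigma_\gamma,\sigma'_{\gamma'})\in\Isotropies$ the map $\sigma'_{\gamma'}\circ H\circ\sigma_\gamma^{-1}$ lies in $\FTwo$ and is isotropically equivalent to $H$; if moreover it is normalized it must equal $H$. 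Hence I would first record the convenient reformulation
\[
\stab_0(H)=\bigl\{(\sigma_\gamma,\sigma'_{\gamma'})\in\Isotropies:\ \sigma'_{\gamma'}\circ H\circ\sigma_\gamma^{-1}\in\NTwo\bigr\},
\]
so that computing $\stab_0(H)$ amounts to determining the residual freedom of the normalization of \autoref{proposition:NormalForm2Nondeg}, i.e.\ all standard parameters $(\gamma,\gamma')$ for which the transformed jet still satisfies the normalization conditions (i)--(vii).

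Next I would run this reduction order by order from the explicit formulas \eqref{eq:sigma}, \eqref{eq:tau} and \eqref{eq:UTwoOneOne}. Comparing the linear parts, the condition $H_z(0)=(1,0,0)$, $H_w(0)=(0,0,1)$ of \autoref{remark:SummaryJetNormalizedMapping} already forces $a_2'=0$, whence $|a_1'|^2=\sigma$ and therefore $\sigma=+1$, together with $\lambda'=\lambda$, $u'a_1'=u$, $c_1'=c$ and $c_2'=0$. Matching the remaining normalized jet data $H_{z^2}(0)$, $H_{zw}(0)$, $H_{w^2}(0)$ and $H_{z^2w}(0)$ then forces $\lambda=1$, $r=r'=0$ and $c=0$ and imposes the single relation $a_1'=u^{-2}$ between the two surviving phases. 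What is left is the one-parameter family of rotations $\theta\mapsto(\sigma_\theta,\sigma'_\theta)$ with
\[
\sigma_\theta(z,w)=(e^{\imu\theta}z,\,w),\qquad \sigma'_\theta(z',w')=(e^{\imu\theta}z_1',\,e^{2\imu\theta}z_2',\,w'),
\]
arising from $\gamma=(1,0,e^{\imu\theta},0)$ and $\gamma'=(1,0,e^{3\imu\theta},(e^{-2\imu\theta},0),(0,0))$; here one checks directly that $\sigma'_\theta\in\Auto(\HeisenbergTwo{\eps}{3},0)$.

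It then remains to decide which rotations $(\sigma_\theta,\sigma'_\theta)$ actually fix a given $H=(f_1,f_2,g)\in\NTwo$. Since $\sigma_\theta$ scales $z$ by $e^{\imu\theta}$ while $\sigma'_\theta$ scales the first and second target coordinates by $e^{\imu\theta}$ and $e^{2\imu\theta}$ and leaves $w'$ fixed, a rotation fixes $H$ precisely when, after clearing the denominator, every monomial $z^aw^b$ in $f_1$ has $a\equiv1$ and every one in $f_2$ has $a\equiv2$ modulo the order of $e^{\imu\theta}$, with the denominator invariant. For $\MapOneParameterFamilyTwo{1}{\eps}$ and $\MapOneParameterFamilyThree{2}{0}{\eps}$ a direct inspection shows $f_1$ is homogeneous of degree $1$ and $f_2$ of degree $2$ in $z$ over a $z$-free denominator, so the whole circle fixes them; thus $\stab_0(\MapOneParameterFamilyTwo{1}{\eps})=\stab_0(\MapOneParameterFamilyThree{2}{0}{\eps})=\{(\sigma_\theta,\sigma'_\theta):\theta\in[0,2\pi)\}$, which is homeomorphic to $\Sphere{1}$ via the real-analytic parametrization $\theta\mapsto(\sigma_\theta,\sigma'_\theta)$. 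For $\MapOneParameterFamilyThree{3}{0}{\eps}$ both a $z$- and a $z^3$-term occur in $f_1$ (and a $z^2$-term in the denominator), forcing $e^{\imu\theta}=e^{3\imu\theta}$, hence $\theta\in\{0,\pi\}$, so only the involution $(z,w)\mapsto(-z,w)$ paired with $(z_1',z_2',w')\mapsto(-z_1',z_2',w')$ survives and $\stab_0(\MapOneParameterFamilyThree{3}{0}{\eps})\cong\Z_2$ with the discrete topology. Finally, for $H\in\FreeNTwo$, that is $\MapOneParameterFamilyThree{2}{s}{\eps}$ and $\MapOneParameterFamilyThree{3}{s}{\eps}$ with $s>0$, the $s$-dependent monomials (e.g.\ the $z^2$- and $w^2$-contributions in $f_1$) break even the $\theta=\pi$ symmetry, so $\stab_0(H)$ is trivial.

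The main obstacle is the \emph{completeness} of the second step: one must verify that matching the full normalized jet $j_0(H)$ really exhausts all constraints, leaving no hidden parameter beyond the circle, and that for $s>0$ the resulting system admits only the trivial solution. This is a finite but somewhat delicate bookkeeping of the prolonged action on $J^3_0(\HeisenbergOne{2},\HeisenbergTwo{\eps}{3})$, most safely carried out with computer algebra as elsewhere in the paper; once it is in hand, the homeomorphism identifications with $\Sphere{1}$ and $\Z_2$ and the equality of the two circle stabilizers follow immediately from the explicit real-analytic parametrizations above.
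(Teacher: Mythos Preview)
Your approach is essentially the paper's: reduce the stabilizer condition to a finite jet system via \autoref{cor:jetDetermination}, solve order by order to isolate a residual one-parameter family of weighted rotations, and then test that family against each explicit map in $\NTwo$. The reformulation $\stab_0(H)=\{(\sigma,\sigma'):\sigma'\circ H\circ\sigma^{-1}\in\NTwo\}$ via the uniqueness clause of \autoref{theorem:ReductionOneParameterFamilies2} is a clean repackaging, but it leads to precisely the same equations the paper writes down directly from $\sigma'\circ H\circ\sigma^{-1}=H$; the paper then reads off the single remaining constraint $s/u=s$, which immediately separates the cases $s>0$ (trivial stabilizer) and $s=0$ (the three exceptional maps), just as your monomial-parity argument does.

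One caution on the bookkeeping you flag yourself: several of your intermediate relations ($\lambda'=\lambda$, $u'a_1'=u$, $c_1'=c$) disagree with what the paper obtains ($\lambda'=1/\lambda$, $uu'a_1'=1$, $c_1'=-uc/\lambda$), because the paper parametrizes $\sigma^{-1}$ rather than $\sigma$ by \eqref{eq:sigma}. The discrepancies are harmless once $\lambda=1$ and $c=0$ are forced, and your final rotation $(\sigma_\theta,\sigma'_\theta)$ agrees (up to relabelling $u\leftrightarrow u^{-1}$) with the paper's explicit generators $(uz,w;\,z_1'/u,z_2'/u^2,w')$ and $(\delta z,w;\,\delta z_1',z_2',w')$; but you should fix the convention before quoting specific intermediate values.
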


\begin{proof}
We let $H =(f,g)= (f_1,f_2,g) \in \NTwo$ satisfy the conditions we collected in \autoref{remark:SummaryJetNormalizedMapping}. We write $s\coloneqq 2 |f_{1 w^2}(0)| \geq 0, x \coloneqq f_{2 w^2}(0) \in \C$ and $y\coloneqq \im\bigl(f_{2 z^2 w}(0)\bigr) \in \R$. By \autoref{cor:jetDetermination} we only need to consider coefficients in $j_0(H)$. We let $(\sigma,\sigma') \in \Isotropies$ with the notation from \eqref{eq:sigma}, \eqref{eq:UTwoOneOne} and \eqref{eq:tau} respectively and consider the equation
\begin{align}
\label{eq:freenessEquation}
\sigma' \circ H \circ \sigma^{-1} = H,
\end{align}
where we parametrize $\sigma^{-1}$ as in \eqref{eq:sigma}. The coefficients of order $1$, which are $f_z(0)$ and $H_w(0)$, are given by $U'~ {^t(u \lambda \lambda',0)}  = (1,0)$ and $U' ~{^t(u c + \lambda c_1', \lambda c_2',\lambda \lambda')}  = (0,0,1)$. These equations imply $\lambda' = 1/\lambda, a_2' = c_2' = 0, a_1' = 1/(u u')$ and $c_1' = - u c/\lambda$. Assuming these standard parameters we consider the coefficients of order $2$, which are $f_{z^2}(0), H_{zw}(0)$ and $H_{w^2}(0)$, given by:
\begin{align}
\label{eq:freenessSecondOrderZ2}
(0,2 u' u^3 \lambda) & = (0,2),\\
\label{eq:freenessSecondOrderZW}
\left(-r - \lambda^2 r' + \imu \eps \lambda^2/ 2, 2 u' u^3 \lambda c,0\right) & =\left(\imu \eps/2,0,0\right),\\
\label{eq:freenessSecondOrderW2}
\left(\lambda^2 (\lambda s + \imu \eps u c)/u,u u' \lambda (\lambda^2 x + 2 u^2 c^2), -2 (r + \lambda^2 r')\right) & = (s,x,0).
\end{align}
The second component of \eqref{eq:freenessSecondOrderZW} implies $c=0$. If we assume this value for $c$ we obtain for the third order terms $f_{z^2 w}(0)$ the following equation:
\begin{align}
\label{eq:freenessThirdOrderZ2W}
\left(2 \imu u \lambda^3 s, u' u^3  \lambda (-4 r - 2 \lambda^2 r' + \imu \lambda^2 y) \right) & = (4 \imu s, \imu y).
\end{align}
The second component of \eqref{eq:freenessSecondOrderZ2} shows $\lambda=1$. Furthermore we obtain from the third component of \eqref{eq:freenessSecondOrderW2} that $r'= - r$ and since from the second component of \eqref{eq:freenessSecondOrderZ2} we get $u' u^3 = 1$, which uniquely determines $u'$, we obtain from the second component of \eqref{eq:freenessThirdOrderZ2W} that $r=0$. The remaining equation from the first component of \eqref{eq:freenessSecondOrderW2}, which comes from the coefficient $f_{1 w^2}(0)$, is $s/u = s$. If $s > 0$ we obtain that $u=1$ and hence all standard parameters are trivial, which proves the first claim of the lemma. If $s=0$, then $H \in\{\MapOneParameterFamilyTwo{1}{\eps}, \MapOneParameterFamilyThree{2}{0}{\eps}, \MapOneParameterFamilyThree{3}{0}{\eps}\}$, since these maps are precisely the one satisfying $f_{1 w^2}(0) = 0$ in the list of mappings from \autoref{theorem:ReductionOneParameterFamilies2}. It is easy to check that the isotropic stabilizers of the maps $\MapOneParameterFamilyTwo{1}{\eps}$ and $\MapOneParameterFamilyThree{2}{0}{\eps}$ are generated by the isotropies $(\sigma(z,w),\sigma'(z_1',z_2',w')) = (u z,w, z_1'/u,z_2'/u^2,w')$ with $|u|= 1$. If we consider $ \MapOneParameterFamilyThree{3}{0}{\eps}$ in \eqref{eq:freenessEquation}, then we obtain that $(\sigma(z,w),\sigma'(z_1',z_2',w')) = (\delta z,w,\delta z_1',z_2',w')$, where $\delta = \pm 1$, are the only elements of $\stab_0(\MapOneParameterFamilyThree{3}{0}{\eps})$, which proves the last claim of the lemma.
\end{proof}

\section{Properties of the Group Action}

\begin{lemma}[label=lemma:CharacterizationProper,name={\cite[Proposition 3.20]{tomdieck}}]
Let $G$ be a topological group acting freely on a topological space $X$ via the action $\alpha: G \times X \rightarrow X$. 
Then the following statements are equivalent:
\begin{compactitem}
\item[\rm{(i)}] $G$ acts properly.
\item[\rm{(ii)}] Let $\alpha': G \times X \rightarrow X \times X$ be given by $\alpha'(g,x) \coloneqq \bigl(x, \alpha(g,x) \bigr)$. The image $C \subset X \times X$ of $\alpha'$ is closed and the map $\varphi_{\alpha}: C \rightarrow G$, given by $\varphi_{\alpha}\bigl(x, \alpha(g,x) \bigr) \coloneqq g$ is continuous.
\end{compactitem}
\end{lemma}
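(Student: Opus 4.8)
The plan is to reduce the entire equivalence to a single clean statement about the continuous injection $\alpha'$. Freeness is exactly what makes $\alpha'$ injective: if $\alpha'(g_1,x_1) = \alpha'(g_2,x_2)$, then comparing first coordinates gives $x_1 = x_2$, and comparing second coordinates gives $\alpha(g_1,x_1) = \alpha(g_2,x_1)$, whence $g_1 = g_2$. Thus $\alpha': G \times X \to C$ is a continuous bijection onto its image, and its set-theoretic inverse is $(x,y) \mapsto \bigl(\varphi_\alpha(x,y),x\bigr)$. Since the second coordinate of this inverse is just $\pr_1|_C$, which is automatically continuous, the inverse of $\alpha'$ is continuous precisely when $\varphi_\alpha$ is continuous. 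Hence condition (ii) says exactly that $C$ is closed and that $\alpha'$ is a homeomorphism onto $C$, i.e. that $\alpha'$ is a closed embedding. The lemma therefore becomes: for the continuous injection $\alpha'$, properness is equivalent to being a closed embedding.

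For the implication (ii) $\Rightarrow$ (i) I would argue directly and in full generality. Assume $\alpha': G \times X \to C$ is a homeomorphism and $C$ is closed. Given a compact set $K \subset X \times X$, the intersection $K \cap C$ is closed in $K$, hence compact, and since $\image(\alpha') = C$ we have $(\alpha')^{-1}(K) = (\alpha')^{-1}(K \cap C)$. Applying the continuous inverse of $\alpha'$ to the compact set $K \cap C$ shows this preimage is compact, so $\alpha'$ is proper, which is the definition of $\alpha$ acting properly. Note that this direction requires no separation or local-compactness hypotheses.

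For (i) $\Rightarrow$ (ii) the route is the standard point-set fact that a proper continuous map into a locally compact Hausdorff (more generally, compactly generated Hausdorff) space is closed. Granting this, $\alpha'$ is a closed map; a closed continuous injection is automatically a homeomorphism onto its image, and its image $C = \image(\alpha')$ is closed because $G \times X$ is closed in itself and $\alpha'$ is closed. Unwinding the reformulation from the first paragraph, this yields simultaneously that $C$ is closed and that $\varphi_\alpha$ is continuous, which is (ii).

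The main obstacle is the closedness step in (i) $\Rightarrow$ (ii): the implication ``proper $\Rightarrow$ closed map'' fails for arbitrary targets and needs $X \times X$ to be sufficiently nice. I would handle this either by invoking the standard theorem for locally compact Hausdorff targets, or, since in the present application $X$ is a subset of a jet space and $G = \Isotropies$ is a real-analytic group, so that all spaces are metrizable, by a sequential argument: if $\alpha'(g_n,x_n) \to (x,y)$, then applying properness to the compact set consisting of this convergent sequence together with its limit produces a convergent subsequence of $(g_n,x_n)$ whose image is $(x,y)$; injectivity pins down its limit, and the compactness just extracted justifies the passage from subsequence to sequence, forcing both closedness of $C$ and continuity of $\varphi_\alpha$.
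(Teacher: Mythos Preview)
The paper does not prove this lemma at all: it is stated with the attribution \cite[Proposition 3.20]{tomdieck} and used as a black box in the proof of \autoref{theorem:propertiesAction}, with no argument supplied. So there is no ``paper's own proof'' to compare against.

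Your argument is the standard one and is correct. The reformulation of (ii) as ``$\alpha'$ is a closed embedding'' via injectivity from freeness is exactly right, and your (ii) $\Rightarrow$ (i) is clean and hypothesis-free. You have also correctly isolated the only delicate point: the implication ``proper $\Rightarrow$ closed'' in (i) $\Rightarrow$ (ii) requires the target $X \times X$ to be, e.g., locally compact Hausdorff or compactly generated Hausdorff, which the lemma as stated does not assume. In tom~Dieck's setting this hypothesis is ambient (his spaces are compactly generated Hausdorff throughout), and in the paper's application $X = \FreeFTwo$ sits inside a finite-dimensional jet space, so everything is metrizable and your sequential argument goes through. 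It would be worth making this standing hypothesis explicit if you write the proof out, since the bare statement in the paper omits it.
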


\begin{remark}[label=rem:NotationGeneralNormalMap]
For $F:(\C^2,0) \rightarrow (\C^3,0)$ a germ of a holomorphic mapping, for which we assume that $F\in \FTwo$ and the jet $j_0(F) \subset j_0^3 F$ is of the form as in \autoref{remark:SummaryJetNormalizedMapping}, we write $F=(f^1,f^2,f^3)$ for the components and denote derivatives of $F$ at $0$ by $f^k_{\ell m} \coloneqq f^k_{z^{\ell} w^m}(0)$.
\end{remark}

\begin{lemma}[label=lemma:SequenceN2Action]
For $n \in \N$ we let $H_n,H \in \FreeNTwo$ and $(\phi_n,\phi'_n) \in \Isotropies$ such that $\phi_n' \circ H_n \circ \phi_n^{-1} \rightarrow H$, then $(\phi_n,\phi'_n) \rightarrow (\id_{\C^2},\id_{\C^3})$ and $H_n \rightarrow H$ as $n \rightarrow \infty$.
\end{lemma}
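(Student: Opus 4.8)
The plan is to exploit the jet determination result (\autoref{cor:jetDetermination}) together with the explicit normal-form conditions from \autoref{remark:SummaryJetNormalizedMapping}, reducing the problem to a convergence statement about finitely many coefficients. Since $H_n, H \in \FreeNTwo \subset \NTwo$, each of these maps already satisfies all the normalization conditions of \autoref{proposition:NormalForm2Nondeg}, so the relevant coefficients are constrained. The key observation is that $\phi_n' \circ H_n \circ \phi_n^{-1} \to H$ in $\tau_J$ means the $3$-jets (indeed the finite data $j_0(\,\cdot\,)$) converge. The strategy is essentially a rigidity argument: run the \emph{same} coefficient-matching computation as in the proof of \autoref{lemma:StabilizerN2}, but now with an error term that vanishes in the limit, and conclude that the standard parameters $\gamma_n, \gamma_n'$ are forced to converge to trivial ones.

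\textbf{First} I would write $(\phi_n,\phi_n') = (\sigma_{\gamma_n}, \sigma'_{\gamma_n'})$ with $\gamma_n = (\lambda_n, r_n, u_n, c_n) \in \Gamma$ and $\gamma_n' = (\lambda_n', r_n', u_n', a_n', c_n') \in \Gamma'$, and denote $\widetilde H_n \coloneqq \phi_n' \circ H_n \circ \phi_n^{-1}$. Comparing the low-order jets of $\widetilde H_n$ with those of $H_n$ (both normalized) gives, order by order, the same system of equations as \eqref{eq:freenessSecondOrderZ2}--\eqref{eq:freenessThirdOrderZ2W}, except that the right-hand sides are the coefficients of $\widetilde H_n$ rather than of $H_n$, and by hypothesis these converge to the corresponding coefficients of $H$. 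Since both $H_n$ and $H$ lie in the set of normalized maps, the first-order matching yields $\lambda_n' = 1/\lambda_n$, $a_{2,n}' = c_{2,n}' = 0$, $a_{1,n}' = 1/(u_n u_n')$, $c_{1,n}' = -u_n c_n/\lambda_n$, exactly as before. Then the second-order equations force $c_n \to 0$ and, crucially, $\lambda_n \to 1$ and $u_n' u_n^3 \to 1$; the third-order equation together with the reality normalizations forces $r_n \to 0$ and $r_n' \to 0$; and the coefficient $f_{1w^2}(0)$ equation, reading $s_n/u_n \approx s_H$ with $s_H > 0$ (since $H \in \FreeNTwo$), forces $u_n \to 1$. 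Feeding this back gives $u_n' \to 1$ and all parameters converging to trivial ones, i.e. $(\phi_n, \phi_n') \to (\id_{\C^2}, \id_{\C^3})$.

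\textbf{The hard part} is justifying that the convergence of the composed jets actually decouples into convergence of the individual parameters, rather than merely pinning them down algebraically at the limit. Because $H \in \FreeNTwo$ we have $s_H = 2|f_{1w^2}^H(0)| > 0$, so the stabilizer equations are rigid and the parameter map is a local homeomorphism near the trivial parameters; the quantity $s_n$ is itself a continuous coordinate on $\FreeNTwo$ bounded away from $0$ near $H$, which is what makes the division $s_n/u_n$ harmless and supplies the final constraint $u_n \to 1$. One must also rule out the parameters escaping to infinity: the equation $2 u_n' u_n^3 \lambda_n \to 2$ with $|u_n| = |u_n'| = 1$ bounds $\lambda_n$ away from $0$ and $\infty$, the third-order equations then bound $r_n, r_n'$, and $c_n \to 0$ follows directly, so no subsequence can diverge.

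\textbf{Finally}, once $(\phi_n, \phi_n') \to (\id, \id)$ is established, the claim $H_n \to H$ follows by continuity: $H_n = (\phi_n')^{-1} \circ \widetilde H_n \circ \phi_n$, and since composition and inversion of isotropies are continuous in the standard parameters (which converge to trivial) while $\widetilde H_n \to H$ by assumption, we obtain $H_n \to (\id)^{-1} \circ H \circ \id = H$ in $\tau_J$. I would phrase this last step using the jet-coordinate description $\JetFTwo \subset \C^{K_0}$ from \autoref{rem:TCequalsTJ} so that all convergences are interpreted as convergence of finitely many complex coordinates, making the continuity arguments transparent.
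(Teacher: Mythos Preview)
Your overall strategy coincides with the paper's: compute the low-order jets of $\widetilde H_n = \phi_n' \circ H_n \circ \phi_n^{-1}$, use their convergence to the (normalized) jets of $H$ to extract convergence of the standard parameters order by order, and exploit $s_H = 2|f^H_{1w^2}(0)| > 0$ for the final rigidity step that pins down $u_n$. The concluding deduction $H_n \to H$ via continuity of composition is also how the paper would have you read it.

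There is, however, a genuine error in your first-order step. You assert that ``the first-order matching yields $\lambda_n' = 1/\lambda_n$, $a_{2,n}' = c_{2,n}' = 0$, $a_{1,n}' = 1/(u_n u_n')$, $c_{1,n}' = -u_n c_n/\lambda_n$, exactly as before.'' This is false: the map $\widetilde H_n$ is \emph{not} normalized---only its limit $H$ is---so the first-order equations give only convergence statements ($\lambda_n \lambda_n' \to 1$, $a_{2,n}' \to 0$, $c_{2,n}' \to 0$, $u_n c_n + \lambda_n c_{1,n}' \to 0$), never exact equalities. You cannot import the algebraic simplifications from the stabilizer computation in \autoref{lemma:StabilizerN2} verbatim.

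This matters for the higher-order analysis. With $a_{2,n}'$ and $c_{2,n}'$ merely tending to zero, the second- and third-order jet expressions retain residual terms that do not vanish identically. The paper handles this by applying $U_n'^{-1}$ to isolate components (equations \eqref{eq:sequenceSecondOrderZ2second} and \eqref{eq:sequenceSecondOrderZWsecond}) and then checking that the residual contributions go to zero; only after that does one read off $\lambda_n \to 1$, $c_n \to 0$, $r_n + \lambda_n^2 r_n' \to 0$, and so on. Your write-up skips this bookkeeping by assuming the simplified equations hold exactly, so as stated the argument is incomplete. The repair is routine---replace each claimed equality by the corresponding limit and verify the extra terms are $o(1)$---but it must be done, and it is precisely the content of the paper's proof.
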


\begin{proof}
We assume for $H_n=(h^1_n,h^2_n,h^3_n)$ and $H=(h^1,h^2,h^3)$ to be given as in \autoref{rem:NotationGeneralNormalMap}, where in $H_n$ the coefficients depend on $n\in \N$. We write $s_n\coloneqq 2 |h^1_{n02}| \in \R^+, x_n \coloneqq h^2_{n 0 2} \in \C$ and $y_n\coloneqq \im\bigl(h^2_{n21} \bigr)$. To each $(\phi_n,\phi'_n) \in \Isotropies$ we associate $(\gamma_n ,\gamma_n') \in \Gamma \times \Gamma'$ respectively, where we use the notation for the parametrization of $\Isotropies$ from \eqref{eq:sigma} and \eqref{eq:tau}. According to \autoref{theorem:ReductionOneParameterFamilies2} we have that $H_n$ depends on $s_n > 0$. Let us denote $\Xi \coloneqq \Gamma \times \Gamma' \times \R^+$ and write $\xi_n=(\gamma_n,\gamma_n',s_n) \in \Xi$. We define $\Psi_{n} \coloneqq \phi_n' \circ H_n \circ \phi_n^{-1}$, which depends on $\xi_n \in \Xi$. For components of $\Psi_n$, we write $\Psi_n=(\psi^1_n,\psi^2_n,\psi^3_n)$ and $\psi_n=(\psi^1_n,\psi^2_n)$. Limits are always considered when $n \rightarrow \infty$. We start with the first order terms of $\Psi_n$. We let $U_n'$ be the $2\times 2$-matrix from \eqref{eq:UTwoOneOne} with entries $u_n', a_{1n}'$ and $a_{2n}'$ instead of $u', a_1'$ and $a_2'$, then we have
\begin{align}
\label{eq:sequenceFirstOrderZ}
\psi_{n z}(0) = ~&  \lambda_n \lambda'_n U_n' ~{^t(u_n,0)},\\
\label{eq:sequenceFirstOrderW}
\Psi_{n w}(0) = ~ & \lambda_n \lambda_n' \Bigr(U_n'~{^t(u_n c_n + \lambda_n c_{1 n}',\lambda_n c_{2 n}'),\lambda_n \lambda'_n}\Bigl).
\end{align}
Since $\psi^3_{n w}(0) \rightarrow 1$ we obtain $\lambda_n \lambda_n' \rightarrow 1$, which implies if we consider \eqref{eq:sequenceFirstOrderZ}, since $\psi_{n z}(0) \rightarrow (1,0)$, that $u_n u_n' a_{1 n}'  \rightarrow 1$ and $a_{2 n}' \rightarrow 0$. Because $a_n'=(a_{1n}',a_{2n}') \in \mathcal S_{\eps,\sigma}^2$ from \eqref{eq:setS}, we have $|a_{1n}'| \rightarrow 1$. If we consider the first two components in \eqref{eq:sequenceFirstOrderW} we obtain from $\psi_{n w}(0) \rightarrow (0,0)$ and $(|a_{1n}'|,|a_{2 n}'|) \rightarrow (1,0)$ that $u_n c_n + \lambda_n c_{1 n}' \rightarrow 0$ and $c_{2n}' \rightarrow 0$. Next we consider the second order terms of $\Psi_n$ to obtain
\begin{align}
\label{eq:sequenceSecondOrderZ2}
\psi_{n z^2}(0) = 
2 u_n \lambda_n \lambda_n' U_n'
{^t( 2 \imu \bigl(\bar c_n + u_n \lambda_n \bar c_{1 n}' \bigr), u_n \lambda_n)},
\end{align}
where the left-hand side of \eqref{eq:sequenceSecondOrderZ2}, $\psi_{n z^2}(0)$, must converge to $(0,2)$. After applying ${U_n'}^{-1}$ we rewrite the second components of \eqref{eq:sequenceSecondOrderZ2} as 
\begin{align}
\label{eq:sequenceSecondOrderZ2second}
2 u_n^2  \lambda_n^2 \lambda_n'  = a_{1 n}'  \Bigl(-\bar a_{2 n}' \psi^1_{n z^2}(0)/(u_n' a_{1 n}' ) +  \psi^2_{n z^2}(0) \Bigr),
\end{align}
where since $(|a_{1n}'|,|a_{2 n}'|) \rightarrow (1,0)$ the absolute value of the right-hand side of \eqref{eq:sequenceSecondOrderZ2second} converges to $2$. Taking the absolute value of the left-hand side of \eqref{eq:sequenceSecondOrderZ2second} implies $\lambda_n \rightarrow 1$, which together with $\lambda_n \lambda_n' \rightarrow 1$ shows $\lambda_n' \rightarrow $1. Next we consider
\begin{align}
\label{eq:sequenceSecondOrderZW}
\psi_{n z w}(0) = 
\frac {\imu} 2 \lambda_n \lambda_n' U_n' {^t ( T_1(\gamma_n,\gamma_n'),4 \lambda_n (c_{2 n}' \bigl(\bar c_n  +u_n \lambda_n \bar c_{1 n}' \bigr) -  \imu  u_n^2 c_n ))},
\end{align}
where the real-analytic function $T_1: \Gamma \times \Gamma' \rightarrow \C$ does not depend on $a_n' \in \mathcal S_{\eps,\sigma}^2$ and $u_n'$. The left-hand side of \eqref{eq:sequenceSecondOrderZW} has to converge to $\bigl(\imu \eps / 2,0\bigr)$ and we rewrite the second component of \eqref{eq:sequenceSecondOrderZW} as
\begin{align}
\label{eq:sequenceSecondOrderZWsecond}
4  \lambda_n \Bigl(c_{2 n}' \bigl(\bar c_n  +u_n \lambda_n \bar c_{1 n}' \bigr) -  \imu  u_n^2 c_n \Bigr)  = -2 \imu  \Bigl(-\bar a_{2 n}' \psi^1_{n z w}(0) +  u_n' a_{1 n}' \psi^2_{n z w}(0) \Bigr)/(\lambda_n \lambda_n' u_n').
\end{align}
Taking the limit, we know since $(|a_{1n}'|,|a_{2 n}'|) \rightarrow (1,0)$ and $(\lambda_n,\lambda_n') \rightarrow (1,1)$, that the right-hand side of \eqref{eq:sequenceSecondOrderZWsecond} converges to $0$ and if we also use $u_n c_n + \lambda_n c_{1 n}' \rightarrow 0$ we obtain that 
$c_n \rightarrow 0$, such that $c_{1n}' \rightarrow 0$. Next we compute
\begin{align}
\label{eq:sequenceSecondOrderW2third}
\psi^3_{n w^2}(0) = 2 \lambda^2_n {\lambda'_n}^2 \Bigl(- (r_n + \lambda_n^2 r_n') + \imu \Bigl(c_n \bar c_n + \eps \lambda_n^2 c_{2n}' \bar c_{2 n'} + \lambda_n \bar c_{1 n}' \bigl(2 u_n c_n + \lambda_n c_{1 n}' \bigr)\Bigr)\Bigr).
\end{align}
We let $n \rightarrow \infty$ and take all the previously obtained limits of the sequences $c'_n=(c_{1n}',c_{2 n}') \in \C^2, c_n$ and $\lambda_n, \lambda_n'$,
then we have since $\psi^3_{n w^2}(0)\rightarrow 0$, that $r_n +  \lambda_n^2 r_n'\rightarrow 0$. Next we compute 
\begin{align}
\label{eq:sequenceSecondOrderW2}
\psi_{n w^2}(0) = \lambda_n \lambda_n' U_n'  {^t( \lambda_n^3 s_n + T_2(\gamma_n,\gamma_n'), \lambda_n^3 x_n + T_3(\gamma_n,\gamma_n'))},
\end{align}
where $T_2, T_3: \Gamma \times \Gamma' \rightarrow \C$ are real-analytic functions and $T_2$ is given by
\begin{align*}
T_2(\gamma_n,\gamma_n') = ~&  2(u_n c_n + c_{1 n}' \lambda_n) \bigl( \imu |c_n|^2 - r_n - \lambda_n^2 r_n' \bigr) + 2 \imu \lambda_n \bar c_{1 n}' (u_n c_n + \lambda_n c_{1 n}') (2 u_n c_n + \lambda_n c_{1 n}') \\
& + \imu \eps \lambda_n^2 \bigl(u_n c_n(1+ 2 |c_{2 n}'|^2) + 2 \lambda_n c_{1 n}'  |c_{2 n}'| \bigr),
\end{align*}
such that $T_2(\gamma_n,\gamma_n') \rightarrow 0$. Then the first component of \eqref{eq:sequenceSecondOrderW2} becomes
\begin{align}
\lambda_n^3 s_n + T_2(\gamma_n,\gamma_n') =  \Bigl(\bar a_{1n}'  \psi^1_{n w^2}(0) + \eps u_n' a_{2 n}' \psi^2_{n w^2}(0) \Bigr)/(\lambda_n \lambda_n' u_n').
\end{align}
Since $\bigl(\psi^1_{n w^2}(0),\psi^2_{n w^2}(0)\bigr) \rightarrow (2 |h^1_{02}|,h^2_{02}) \in \R^+ \times \C$ we obtain that $\bar a_{1n}'/u_n' \rightarrow 1$ and $s_n \rightarrow 2 |h^1_{02}|$. Then $u_n u_n' a_{1 n}'  \rightarrow 1$ implies that $u_n \rightarrow 1$ and further inspection of \eqref{eq:sequenceSecondOrderZ2second} gives $u_n^2 / a_{1 n}' \rightarrow 1$, which shows $a_{1n}' \rightarrow 1$ and $u_n' \rightarrow 1$. Finally we consider 
\begin{align}
\label{eq:sequenceThirdOrder}
\psi_{n z^2 w}(0) = \lambda_n \lambda_n' U_n' 
\left( 
\begin{array}{c}
- 4 \imu u_n^2 \lambda_n^3 s_n + T_4(\gamma_n,\gamma_n') \\ 
-2 \eps u_n^2 \lambda_n (2 r_n + \lambda_n^2 r_n') + \imu \eps u_n^2 \lambda_n^3 y_n + 6 u_n^3 \lambda_n^2 c_n s_n + T_5(\gamma_n,\gamma_n')
\end{array}
\right),
\end{align}
where $T_4, T_5: \Gamma \times \Gamma' \rightarrow \C$ are real-analytic functions and $T_5$ is given by
\begin{align*}
T_5(\gamma_n,\gamma_n') = ~& 2 \imu \eps \lambda_n \Bigl(4 \imu \bar c_n c_{2n}' (\bar c_n + 2 u_n \lambda_n \bar c_{1n}' )  + 2 c_n u_n^2 (5 \bar c_n + 3  u_n \lambda_n \bar c_{1n}') \\
& +  u_n^2 \lambda_n^2 \bigl(|c_{1n}'|^2  + 3 \eps |c_{2n}'|^2 + 4 \imu \bar c_{1n}' c_{2 n}'\bigr)\Bigr),
\end{align*}
hence $T_5(\gamma_n,\gamma_n') \rightarrow 0$. Since $(\psi^1_{nz^2 w}(0),\psi^2_{nz^2 w}(0)) \rightarrow (2 \imu |h^1_{02}|,\imu h^2_{21}) \in \imu \R \times \imu \R$ we obtain if we consider the real part of the second component of \eqref{eq:sequenceThirdOrder}, that  $2 r_n + r_n' \rightarrow 0$, which together with $r_n +  \lambda_n^2 r_n'\rightarrow 0$ shows $(r_n,r_n' ) \rightarrow (0,0)$. To sum up we obtain $(\phi_n,\phi_n') \rightarrow (\id_{\C^2},\id_{\C^3})$, which completes the proof.
\end{proof}

\begin{proof}[Proof of \autoref{theorem:propertiesAction}]
First we observe that $N$ is a continuous map from $\Isotropies\times \FreeFTwo$ to $\FreeFTwo$, since the image of $N$ consists of rational mappings, which depend real-analytically on the jets of the isotropies and the mapping. By construction $N$ is a left action and \autoref{lemma:StabilizerN2} shows that $N$ restricted to $\FreeNTwo$ is a free action. Next we assume the general case $H\in \FreeFTwo$ and consider the equation $\phi' \circ H \circ \phi^{-1} = H$ for $(\phi,\phi') \in \Isotropies$. We can write $H = \widehat {\phi'} \circ \widehat H \circ {\widehat {\phi}}^{-1}$, where $\widehat H\in \FreeNTwo$ and $(\widehat {\phi},\widehat {\phi}') \in \Isotropies$ are unique according to \autoref{lemma:StabilizerN2}. After setting $(\psi,\psi')=({\widehat {\phi}}^{-1} \circ \phi \circ \widehat {\phi},{\widehat {\phi'}}^{-1}  \circ \phi' \circ \widehat \phi' )$ we rewrite $\phi' \circ H \circ \phi^{-1} = H$ as $\psi' \circ \widehat H \circ \psi^{-1} = \widehat H$. Since $N$ acts freely on $\FreeNTwo$ we obtain that $(\psi,\psi') = (\id_{\C^2}, \id_{\C^3})$ and the freeness of the action. To show the properness of $N$ we prove (ii) of \autoref{lemma:CharacterizationProper}. We let the mapping $N': \Isotropies \times \FreeFTwo \rightarrow \FreeFTwo \times \FreeFTwo$ be given by $N'(\phi,\phi',H) \coloneqq \bigl(H,N(\phi,\phi',H)\bigr)$. Then we know from \autoref{proposition:NormalForm2Nondeg} that the image $C$ of $N'$ agrees with $\FreeFTwo \times \FreeFTwo$, which is closed in $\FreeFTwo \times \FreeFTwo$. Next we let the mapping $\varphi_N: C \rightarrow \Isotropies$ be given by $\varphi_N \bigl(H, N(\phi,\phi',H)\bigr) \coloneqq (\phi,\phi')$. To show the continuity of $\varphi_N$ we let $(H_n)_{n \in \N}\in \FreeFTwo$ be a sequence of mappings with
\begin{align}
\label{eq:isoHnHatH}
H_n  \rightarrow H\in \FreeFTwo, \qquad \text{and} \qquad \phi_n' \circ H_n \circ \phi_n^{-1}  \rightarrow \check H \in \FreeFTwo.
\end{align}
Using \autoref{proposition:NormalForm2Nondeg} we assume w.l.o.g. $H\in \FreeNTwo$. Moreover by \autoref{proposition:NormalForm2Nondeg} we write $\check H = \phi' \circ \widehat H \circ \phi^{-1}$ for $\widehat H \in \FreeNTwo$. Then we need to conclude that $(\phi_n, \phi_n') \rightarrow (\phi,\phi')$ and $H = \widehat H$, which implies the continuity of $\varphi_N$. For each $n \in \N$ we write $H_n = \widehat{\phi}_n' \circ \widehat H_n \circ {\widehat \phi_n}^{-1}$, where $\widehat H_n\in \FreeNTwo$. If we substitute the above representations of $H_n$ and $\check H$ into \eqref{eq:isoHnHatH} we obtain $\varphi_n' \circ \widehat H_n \circ \varphi_n^{-1} \rightarrow \widehat H \in \FreeNTwo$, where $(\varphi_n,\varphi_n')=(\phi^{-1} \circ \phi_n \circ {\widehat \phi_n} ,{\phi'}^{-1} \circ \phi_n' \circ \widehat{\phi}_n')$. By \autoref{lemma:SequenceN2Action} we have $(\varphi_n,\varphi_n') \rightarrow (\id_{\C^2}, \id_{\C^3})$. Again applying \autoref{lemma:SequenceN2Action} to $H_n \rightarrow H\in \FreeNTwo$ shows $(\widehat{\phi}_n,\widehat \phi_n') \rightarrow (\id_{\C^2},\id_{\C^3})$ to obtain $(\phi_n,\phi_n') \rightarrow (\phi,\phi')$ as required.
\end{proof}

\section{On the Real-Analytic Structure of \texorpdfstring{$\FreeFTwo$}{FreeF2}}
\label{sec:Structure}

\begin{lemma}[label=lemma:preimageMF]
Let $\Pi: \FreeFTwo \rightarrow \FreeNTwo$ be given by $\Pi(H) \coloneqq \phi' \circ H \circ \phi^{-1}$, where $(\phi,\phi') \in \Isotropies$ are the unique isotropies according to \autoref{proposition:NormalForm2Nondeg} and \autoref{lemma:StabilizerN2}. For $k=2,3$ we write $M_{k,\eps} \coloneqq \{\Pi^{-1}(\MapOneParameterFamilyThree{k}{s}{\eps}): s > 0\}$. Then $M_{k,\eps}$ is a real-analytic real submanifold of $\FreeFTwo$ of real dimension $16$.
\end{lemma}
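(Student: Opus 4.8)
The plan is to realize $M_{k,\eps}$ as the image of a single real-analytic embedding and to read off its dimension from the domain. Fix $k \in \{2,3\}$ and define
\[
\Theta_k \colon \Isotropies \times (0,\infty) \longrightarrow \FreeFTwo, \qquad \Theta_k(\phi,\phi',s) \coloneqq \phi' \circ \MapOneParameterFamilyThree{k}{s}{\eps} \circ \phi^{-1}.
\]
Its image is exactly $M_{k,\eps}$, since $\Pi(\Theta_k(\phi,\phi',s)) = \MapOneParameterFamilyThree{k}{s}{\eps}$ and $s>0$. The map $\Theta_k$ is real-analytic because the jet of $\phi'\circ \MapOneParameterFamilyThree{k}{s}{\eps}\circ \phi^{-1}$ depends real-analytically on $s$ and on the standard parameters of $(\phi,\phi')$ through \eqref{eq:sigma}, \eqref{eq:UTwoOneOne} and \eqref{eq:tau}, while $\FreeFTwo$ is identified with a subset of $\C^{K_0}$, $K_0=15$, as in \autoref{rem:TCequalsTJ}. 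Since $\Gamma$ has real dimension $5$ and $\Gamma'$ has real dimension $10$ (the set $\mathcal S^2_{\eps,\sigma}$ being a real hypersurface in $\C^2$), the domain $\Isotropies\times(0,\infty)$ is a real-analytic manifold of real dimension $16$. I will show that $\Theta_k$ is a real-analytic embedding; then $M_{k,\eps}=\image\Theta_k$ is a $16$-dimensional embedded real-analytic submanifold of $\FreeFTwo$.

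The central tool is the real-analytic invariant $S\colon \FreeFTwo \to \R^+$, $S(H)\coloneqq 2\,|f^1_{w^2}(0)|$ computed from the normal form $\Pi(H)=(f^1,f^2,f^3)$ in the notation of \autoref{rem:NotationGeneralNormalMap}. It is real-analytic because $\Pi$ is, the normalizing isotropies depending real-analytically on $j_0^3(H)$ as in the proof of \autoref{thm:differentNormalForm}, and $S$ is positive on $\FreeFTwo$. By construction $S$ is constant on isotropic orbits, and a short inspection of \autoref{theorem:ReductionOneParameterFamilies2} gives $S(\MapOneParameterFamilyThree{k}{s}{\eps})=s$, so that $S\circ\Theta_k$ equals the projection $(\phi,\phi',s)\mapsto s$. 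Injectivity of $\Theta_k$ follows at once: if $\Theta_k(\phi,\phi',s)=\Theta_k(\psi,\psi',t)$, applying $S$ gives $s=t$, and then freeness of the action on $\FreeNTwo$ (\autoref{lemma:StabilizerN2}, \autoref{theorem:propertiesAction}) forces $(\phi,\phi')=(\psi,\psi')$. For the immersion property, note that by freeness and properness of the action (\autoref{theorem:propertiesAction}) each orbit $O_0(\MapOneParameterFamilyThree{k}{s}{\eps})$ is an embedded real-analytic submanifold and the corresponding orbit map $\Isotropies\to\FreeFTwo$ is an immersion of rank $15$; hence the restriction of $d\Theta_k$ to the $\Isotropies$-directions is injective. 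Transversality of the remaining $\partial_s$-direction is immediate from the invariant: since $S\circ\Theta_k$ is the $s$-projection, $dS(d\Theta_k(\partial_s))=1$, whereas $dS$ annihilates every orbit-tangent vector, so $d\Theta_k(\partial_s)$ cannot lie in the $15$-dimensional image of the orbit differential. Thus $d\Theta_k$ is injective everywhere and $\Theta_k$ is an immersion.

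It remains to show that $\Theta_k^{-1}$ is continuous on $M_{k,\eps}$, upgrading the injective immersion to an embedding; this is where \autoref{lemma:SequenceN2Action} does the work, and I expect it to be the main obstacle, because the limit need not be normalized and must first be reduced to the normalized situation. Suppose $\Theta_k(\phi_n,\phi_n',s_n)\to G=\Theta_k(\psi,\psi',s_\infty)$. Applying the continuous invariant $S$ gives $s_n\to s_\infty>0$, hence $H_n\coloneqq\MapOneParameterFamilyThree{k}{s_n}{\eps}\to H\coloneqq\MapOneParameterFamilyThree{k}{s_\infty}{\eps}$ in $\FreeNTwo$. Writing $G=\psi'\circ H\circ\psi^{-1}$ with $H\in\FreeNTwo$ and conjugating the convergence by $(\psi,\psi')$, one obtains $\varphi_n'\circ H_n\circ\varphi_n^{-1}\to H$ in $\FreeNTwo$, where $(\varphi_n,\varphi_n')=(\psi^{-1}\circ\phi_n,\,{\psi'}^{-1}\circ\phi_n')$. \autoref{lemma:SequenceN2Action} then yields $(\varphi_n,\varphi_n')\to(\id_{\C^2},\id_{\C^3})$, that is $(\phi_n,\phi_n')\to(\psi,\psi')$, together with $s_n\to s_\infty$, which is precisely the continuity of $\Theta_k^{-1}$ at $G$. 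Consequently $\Theta_k$ is an injective real-analytic immersion that is a homeomorphism onto its image, hence a real-analytic embedding, and $M_{k,\eps}=\image\Theta_k$ is an embedded real-analytic submanifold of $\FreeFTwo$ of real dimension $16$, as claimed.
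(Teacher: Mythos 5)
Your overall architecture --- parametrize $M_{k,\eps}$ by $\Isotropies \times \R^+$ via $\Theta_k(\phi,\phi',s) = \phi' \circ G_{k,s}^{\eps}\circ\phi^{-1}$, check that this is a real-analytic embedding, and use \autoref{lemma:SequenceN2Action} to get the homeomorphism onto the image --- is exactly the paper's (your $\Theta_k$ is the map $\Psi$ of \eqref{eq:parametrizationPreimage} in coordinates), and your injectivity argument via the invariant $S$ and your continuity-of-the-inverse argument are sound and mirror the paper's final paragraph. The gap is in the immersion step. You assert that, by freeness and properness of the action, each orbit map $\Isotropies \to \FreeFTwo$ is an immersion of rank $15$. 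The theorem you are invoking (orbits of free proper actions are embedded copies of the group) concerns smooth or real-analytic actions of Lie groups on \emph{manifolds}; here the space acted on is $\FreeFTwo \subset \C^{K_0}$, which is not known to be a manifold at this stage --- indeed \autoref{theorem:realAnalyticStructureTop} shows it is \emph{not} one for $\eps = -1$ --- and the action is not shown to extend real-analytically to a neighborhood of $\JetFTwo$ in $\C^{K_0}$. Without such an ambient structure, triviality of the stabilizers (\autoref{lemma:StabilizerN2}) is a purely set-theoretic fact: it gives injectivity of the orbit map but says nothing about the rank of its differential, since an isolated solution of a real-analytic system can sit at a degenerate zero of the Jacobian.

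Establishing that $d\Theta_k$ has full rank $16$ is precisely the computational core of the paper's proof: it expands the jet of $\phi'_{\gamma'}\circ F_s \circ \phi_{\gamma}^{-1}$ to first order in the standard parameters, forms the $2K_0 \times N_0$ Jacobian $\Phi_{\xi}(\xi_0)$ of the doubled system $(\Psi,\overline{\Psi})$ at the trivial parameters, and brings it into echelon form by explicit row operations, with a case analysis depending on whether certain combinations of $x = f^2_{02}$, $x'$, $y$, $y'$ vanish (the cases $x = Cs^2$, $x = Cs^{4/5}$, $x=0$). That computation is what your appeal to freeness silently replaces, and it cannot be omitted. A smaller issue in the same direction: your transversality argument applies the chain rule to $S = 2\,\bigl|f^1_{w^2}(0)\bigr|\circ\Pi$, which requires $S$ to be differentiable on a neighborhood of $M_{k,\eps}$ in $\C^{K_0}$ (or on $M_{k,\eps}$ itself, which is not yet known to be a manifold); the paper never needs this because it differentiates the explicit parametrization with respect to its own variables instead. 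To repair your proof you would have to either carry out the rank computation directly, or first construct a real-analytic extension of the action (and of $\Pi$) to an open neighborhood of $\JetFTwo$ in $\C^{K_0}$ and only then invoke the Lie-theoretic statement about orbit maps.
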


\begin{proof}[Proof of \autoref{lemma:preimageMF}]
For fixed $k=2,3, s > 0$ and $\delta >0$ we write $G_{\delta,s} \coloneqq \{\MapOneParameterFamilyThree{k}{t}{\eps}: t \in B_{\delta}(s) \cap \R^+\}$, where $B_{\delta}(s) \coloneqq \{t \in \R^+:|t-s|< \delta \}$. To prove the lemma we show that for every $s_0\in\R^+$ and sufficiently small $\delta_0>0$ there exists a locally real-analytic parametrization for $M \coloneqq \Pi^{-1}(G_{\delta_0,s_0})$. As noted in \autoref{rem:TCequalsTJ} we identify $\FTwo$ with the set $\JetFTwo \subset \C^{K_0}$. \autoref{theorem:ReductionOneParameterFamilies2} implies that for each $H\in M$ there exist $(\phi,\phi') \in \Isotropies, k\in\{2,3\}$ and $s_1 \in B_{\delta_0}(s_0) \cap \R^+$, such that $H= \phi' \circ \MapOneParameterFamilyThree{k}{s_1}{\eps} \circ \phi^{-1}$. This fact is used to describe $M$ locally via parametrizations as follows: For $s> 0$ sufficiently near $s_0$ let $F_s$ be a mapping as in \autoref{rem:NotationGeneralNormalMap}, which depends real-analytically on $s\coloneqq 2|f^1_{02}|$. For the remaining coefficients in $j_0(F_s)$ we write $x \coloneqq f^2_{02}$ and $y\coloneqq \im\bigl(f^2_{21} \bigr)$, where we suppress the dependence on $s$ notationally. We use the real version of the notation for the parametrization of $\Isotropies$ as in \eqref{eq:sigma} and \eqref{eq:tau}. Here we denote the set of real parameters of $\Auto(\HeisenbergOne{2},0)$ by $\Gamma$ and of $\Auto(\HeisenbergTwo{\eps}{3},0)$ by $\Gamma'$. Let us denote $\Xi \coloneqq \Gamma \times \Gamma' \times \R^+ \subset \R^{N_0}$, where $N_0 \coloneqq 16$. For $\xi=(\gamma,\gamma',s)\in \Xi$ we define the mapping
\begin{align}
\label{eq:parametrizationPreimage}
\Psi: \Xi \rightarrow \JetFTwo,  \qquad \Psi(\xi) \coloneqq j_0(\phi'_{\gamma'} \circ F_s \circ \phi_{\gamma}^{-1}),
\end{align}
where we use the notation as in \eqref{eq:sigma} and \eqref{eq:tau} for $\phi_{\gamma}$ and $\phi'_{\gamma'}$ respectively and suppress the dependence on  $\eps$. We set $\check \Psi(z,w) \coloneqq \bigl(\phi'_{\gamma'} \circ F_s \circ \phi_{\gamma}^{-1}\bigr) (z,w)$ with components $\check \Psi= (\check\psi^1,\check\psi^2,\check\psi^3)$ and $\check \psi \coloneqq (\check\psi^1,\check\psi^2)$. The holomorphic mapping $\check \Psi$ is defined in a small neighborhood $U \subset \C^2$ of $0$ and satisfies $\check \Psi(\HeisenbergOne{2} \cap U) \subset \HeisenbergTwo{\eps}{3}$. By \autoref{theorem:ReductionOneParameterFamilies2} and the real-analytic dependence of the isotropies on the standard parameters, we note that $\Psi$ and $\check \Psi$ are real-analytic in $\xi\in \Xi$. We assume w.l.o.g. that $\xi_0$ is chosen in such a way that $(\phi_{\gamma},\phi'_{\gamma'})=(\id_{\C^2},\id_{\C^3})$. Consequently we write $O(2)$ for terms involving standard parameters of the isotropies which vanish to second order at $\xi_0$. Moreover since we only consider $a_1' \in \C$ near $1$ and $a'=(a_1',a_2')\in \mathcal S_{\eps,\sigma}^2$ from \eqref{eq:setS}, we substitute $\bar a_1' = (1- \eps |a_2'|^2)/a_1'$ into $\Psi$, which is then given by the following expressions:
\begin{align*}
\check \psi_{z}(0) = & \bigl(u u' \lambda \lambda' a_1', u \lambda \lambda' \bar a_2'\bigr),\\
\check \Psi_w(0)  = & \Bigl(u' \lambda \lambda' a_1' (u c + \lambda c_1'), \lambda^2 \lambda' c_2'/a_1', \lambda^2 {\lambda'}^2 \Bigr) + O(2), \\
\check \psi_{z^2}(0)  = & \Bigl(2 \imu u u' \lambda \lambda' (\imu \eps u \lambda a_2' + 2 (\bar c + u \lambda \bar c_1') a_1' ),  2 u^2 \lambda^2 \lambda' / a_1' \Bigr) + O(2),\\
\check \Psi_{z w}(0) = & \Bigl(- \frac 1 2 u u' \lambda \lambda' a_1' (2 (r+ \lambda^2 r') - \imu \eps \lambda^2), u \lambda^2 \lambda' \left(\frac {\imu \eps} 2 \lambda \bar a_2'+ \frac{2 u c}{a_1'} \right), 2 \imu \lambda^2 {\lambda'}^2 (\bar c + u \lambda \bar c_1') \Bigr) \\
& + O(2),\\
\check \Psi_{w^2}(0) = & \Bigl(u' \lambda^3 \lambda' \bigl(a_1' (\imu \eps u c + \lambda s) - \eps \lambda a_2'  x \bigr), \lambda^4 \lambda' \bigl(x/a_1' + \bar a_2' s \bigr), - 2 \lambda^2 {\lambda'}^2 (r + \lambda^2 r')\Bigr) + O(2),\\
\check \psi_{z^2 w}(0) = & \Bigl(-u u' \lambda^3 \lambda' \Bigl(4 a_1'\bigl(- \imu u \lambda s + \eps(\bar c + u \lambda \bar c_1')\bigr) + \imu \eps u \lambda a_2' y \Bigr), \\
& \quad u^2 \lambda^2 \lambda' \Bigl( \Bigl(-2 (2 r + \lambda^2 r') +6 \eps u \lambda c s + \imu \lambda^2 y \Bigr)/a_1' + 2 \imu \lambda^2 \bar a_2' s \Bigr) \Bigr) + O(2).
\end{align*}
In a first step we show that for given $\xi_0 \in \Xi$ the Jacobian of $\Psi$ with respect to $\xi$ evaluated at $\xi_0$, denoted by $\Psi_{\xi}(\xi_0)$, is of full rank $N_0$. But instead of considering the real equations of $\Psi$, we conjugate $\Psi$ and compute the Jacobian of the system $\Phi \coloneqq(\Psi, \overline{\Psi})\in \C^{2 K_0}$, with respect to $\xi=(u,\lambda,c,r, u', a_1',a_2',\lambda',c_1',c_2',r',s; \bar c,\bar a_2',\bar c_1',\bar c_2')\in \C^{N_0}$ and evaluate at
\begin{align}
\label{eq:xizero}
\xi_0=(1,1,0,0,1,1,0,1,0,0,0,s_0;0,0,0,0)\in \R^{N_0},
\end{align}
denoted by $\Phi_{\xi}(\xi_0)$. We bring the transpose of $\Phi_{\xi}(\xi_0)$ into echelon form, where we denote the resulting matrix by $\varphi=(\varphi^1,\ldots,\varphi^{N_0})$, where $\varphi^j=(\varphi^j_1,\ldots, \varphi^j_{2 K_0}) \in \C^{2 K_0}, 1 \leq j \leq N_0$, such that $\rank\bigl(\Phi_{\xi}(\xi_0)\bigr) = \rank(\varphi)$. In the following we suppress the evaluation of $\Phi$ at $\xi_0$ notationally and perform elementary row operations. The matrix given by
\begin{align*}
(\varphi^1,\ldots, \varphi^{11}) \coloneqq  & \quad \Bigl(\Phi_u,\Phi_{\bar a_2'},\Phi_{c_1'}, \Phi_{c_2'} , \Phi_{\lambda}, \Phi_{\bar c}, \Phi_{a_1'}, \Phi_{r'}, \Phi_{c},\Phi_{a_2'}, \Phi_{s} \Bigr) \\
& -\Bigl(0,0,0,\Phi_u,\Phi_u,0,\Phi_u,0,\Phi_{c_1'}, \imu \eps/2 \Phi_{\bar c},0\Bigr),
\end{align*}
is in row echelon form, with constant nonzero entries in the main diagonal. Each $0$ above represents $0 \in \C^{2 K_0}$. Next we define
\begin{align*}
\varphi^{12} \coloneqq  \Phi_{\lambda'} + \Phi_u/3 - \Phi_{\lambda} - \Phi_{a_1'}/3 - \imu \eps/8 \Phi_{r'} + 10 s_0/3 \Phi_s, \varphi^{13} \coloneqq  \Phi_{u'} - \Phi_u/3 - 2/3 \Phi_{a_1'} - 2/3 \Phi_{s},
\end{align*}
which are of the following form, where we denote by $h'$ derivatives of a function $h$ depending on $s$ with respect to $s$:
\begin{align*}
\varphi^{12} = & ~  (0,\ldots,0,\varphi^{12}_{12}, \ldots, \varphi^{12}_{2 K_0}) \\
= & \left(0,\ldots,0, \frac{-2(4 x - 5 s_0 x')} 3, 2 \imu \eps, \frac{8\imu s_0}  3, \frac{2 \imu (3 \eps - 3 y + 5 s_0 y')}3,-\frac 1 3, \varphi^{12}_{17}, \ldots \varphi^{12}_{2 K_0}\right)\\ 
\varphi^{13} = & ~ (0,\ldots,0,\varphi^{13}_{12}, \ldots, \varphi^{13}_{2 K_0}) = \left(0,\ldots,0, \frac{2 x - s_0 x'} 3,0,-\frac{8 \imu s_0} 3,-\frac{\imu s_0 y'} 3,-\frac 2 3,\varphi^{13}_{17}, \ldots \varphi^{13}_{2 K_0}\right).
\end{align*}
Then we define $\varphi^{14} \coloneqq  \Phi_r - \Phi_{r'}, \varphi^{15} \coloneqq  \Phi_{\bar c'_2}, \varphi^{16} \coloneqq \Phi_{\bar c'_1}$, and compute $\varphi^{14} = -2 (e_{15} +e_{2 K_0}), \varphi^{15} = e_{19}, \varphi^{16} = -2 e_{24}+ \imu \eps e_{26} -12 \eps s e_{2 K_0}$, where for $j \in \N$ we denote by $e_j$ the $j$-th unit vector in $\R^{2 K_0}$. We have to consider several cases. If $\varphi^{12}_{12} \neq 0$, we consider $\tilde \varphi^{13} \coloneqq \varphi^{13}-\varphi^{13}_{12} \varphi^{12}/\varphi^{12}_{12}$, such that $\tilde \varphi^{13}_{13}$ is a multiple of $-2 x + s_0 x'$. If $\tilde \varphi^{13}_{13} \neq 0$, then $\varphi=(\varphi^1,\ldots,\varphi^{12},\tilde \varphi^{13}, \varphi^{14},\varphi^{15},\varphi^{16})$ is in echelon form. If $\tilde \varphi^{13}_{13} =0$, then $x = C s^2$, where $C\in \C \setminus\{ 0\}$ and we have $\tilde \varphi^{13}_{14} \neq 0$, which again implies that $\varphi=(\varphi^1,\ldots,\varphi^{12},\tilde \varphi^{13}, \varphi^{14},\varphi^{15},\varphi^{16})$ is in echelon form. Next we treat $\varphi^{12}_{12} = 0$. First we consider the trivial case. If $x=0$, then since $s_0>0$, we have $x'=0$ and $\varphi=(\varphi^1,\ldots,\varphi^{16})$ is in echelon form. Now we assume $x\neq 0$ which implies $x' \neq 0$ and we solve $\varphi^{12}_{12} = 0$. The solution is given by $x = C s^{4/5}$, where $C \in \C \setminus \{ 0\}$ and $\varphi=(\varphi^1,\ldots,\varphi^{11}, \varphi^{13}, \varphi^{12}, \varphi^{14},\varphi^{15},\varphi^{16})$ is in echelon form. We sum up that in all cases the Jacobian $\Phi_{\xi}(\xi_0)$ of the system $\Phi$ evaluated at $\xi_0$ is of full rank $N_0$, hence we conclude that $\Psi$ from \eqref{eq:parametrizationPreimage} is a real-analytic locally regular mapping if we choose $\delta_0>0$ sufficiently small in $M$. For $\Psi$ to be a local parametrization of $M$ it remains to show that for each sufficiently small neighborhood $U \subset \Xi \subset \R^{N_0}$ of $\xi_0$, there exists a neighborhood $W \subset \C^{K_0}$ of $\Psi(\xi_0) = F_{s_0}$, such that $\Psi(U) = W \cap M$. We have $\Psi(U) = \{j_0(H): \exists \xi=(\gamma,\gamma',t) \in U: H = {\phi'}^{-1}_{\gamma'} \circ F_t \circ \phi_{\gamma} \}$ and with the notation from the very beginning of this proof for $\delta>0$ we have
\begin{align*}
M = \Pi^{-1}(F_{\delta,s_0}) = & \{H \in \FreeFTwo: \exists(\gamma,  \gamma',s) \in \Gamma \times \Gamma' \times B_{\delta}(s_0) \cap \R^+: {\phi'}_{\gamma'} \circ H \circ \phi_{\gamma}^{-1} = F_s\}.
\end{align*}
\autoref{rem:TCequalsTJ},  together with the fact that for each $H \in M$ we can write $H= {\phi'}_{\gamma'}^{-1} \circ F_s \circ \phi_{\gamma}
$, shows $\Psi(U) \subset M$. We assume that there exists $U \subset \Xi$ a neighborhood of $\xi_0$, such that for any neighborhood $W$ of $\Psi(\xi_0) = F_{s_0}$ we have $\Psi(U) \neq W \cap M$. We choose open, connected neighborhoods $(W_n)_{n\in \N}$ of $F_{s_0}$ with $\bigcap_n W_n = \{F_{s_0}\}$ and $\Psi(U) \neq W_n \cap M$ for all $n \in \N$. There exists a sequence of mappings $(H_n)_{n\in \N} \in \FreeFTwo$ such that $H_n \in W_n \cap M$ and $H_n \not\in \Psi(U)$. We write $H_n = {\phi'}^{-1}_{\gamma_n'} \circ F_{s_n} \circ \phi_{\gamma_n}$ and conclude by \autoref{lemma:SequenceN2Action} that $(\gamma_n,\gamma_n',s_n) \rightarrow \xi_0$ in $\Xi$. Thus eventually $H_n \in \Psi(U)$ for large enough $n\in \N$, which completes the proof of the lemma.
\end{proof}

\begin{remark}[label=rem:realAnalyticLieGroup]
 By \cite[Corollary 1.2]{BER97} the group $\Isotropies$ is a totally real, closed, real-analytic submanifold of  $G^2_0(\HeisenbergOne{2},0) \times G^2_0(\HeisenbergTwo{\eps}{3},0) \subset J_0^2(\HeisenbergOne{2},0) \times J_0^2(\HeisenbergTwo{\eps}{3},0)$. Hence $\Isotropies$ is a real-analytic real Lie group. \cite[Theorem 4]{BM} states that the action of  a real-analytic Lie group $G$ on a real-analytic manifold $M$ is real-analytic, i.e., the map $G \times M \rightarrow M, (g,m) \rightarrow g \cdot m$ is a real-analytic map between real-analytic manifolds. Hence we obtain for $M$ being a real-analytic submanifold, that $N: \Isotropies \times M \rightarrow M$ is a real-analytic action.
 \end{remark}

 \begin{theorem}[name={\cite[Theorem 1.11.4]{dk}},label=thm:localTrivial]
Let $M$ be a real-analytic manifold equipped with an action $G \times M \rightarrow M$, where $G$ is a real-analytic Lie group. Assume that the action is free and proper. Then $M/G$ has the unique structure of a real-analytic manifold of real dimension $\dim_{\R} M - \dim_{\R} G$ and the topology of $M/G$ is the quotient topology $\tau_Q$. We denote by $\varphi: M \rightarrow M / G$ the canonical projection given by $\varphi(m) = G \cdot m \coloneqq \{g \cdot m: g \in G\}$ for $m\in M$. For every $s \in M/G$ there is an open neighborhood $S \subset M/G$ of $s$ and a real-analytic diffeomorphism $\psi: \varphi^{-1}(S) \rightarrow G \times S, \psi: m \mapsto (\psi_1(m),\psi_2(m))$, such that for $m \in \varphi^{-1}(S), g\in G$ we have $\varphi(m) = \psi_2(m)$ and $\psi(g \cdot m) = (g \cdot \psi_1(m),\psi_2(m))$.
\end{theorem}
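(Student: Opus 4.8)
The statement is the classical local slice theorem for free proper actions, here in the real-analytic category, and the plan is to build a real-analytic slice transverse to each orbit and then use properness to promote a local diffeomorphism into a genuine tubular neighbourhood. First I would check that $M/G$ is Hausdorff: properness says the map $(g,m)\mapsto (m,g\cdot m)$ is proper, so its image --- the graph of the orbit relation --- is closed in $M\times M$; since the orbit projection $\varphi$ is always open, the standard criterion then yields that $M/G$ is Hausdorff. Fix $m\in M$ with $s=\varphi(m)$. Freeness makes the orbit map $g\mapsto g\cdot m$ an injective immersion, and together with properness it is a proper injective immersion, hence an embedding; thus $G\cdot m$ is a closed real-analytic submanifold diffeomorphic to $G$.

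For the slice I would choose a real-analytic submanifold $\Sigma\subset M$ through $m$ with $\dim_\R\Sigma=\dim_\R M-\dim_\R G$ and $T_mM=T_m(G\cdot m)\oplus T_m\Sigma$, obtained from a real-analytic chart by taking the image of a complementary linear subspace. Consider the tube map $a:G\times\Sigma\to M$, $a(g,\sigma)=g\cdot\sigma$, which is real-analytic since the action is. Its differential at $(e,m)$ carries $T_eG\oplus T_m\Sigma$ onto $T_m(G\cdot m)\oplus T_m\Sigma=T_mM$, injectivity on $T_eG$ being exactly freeness at the infinitesimal level, so it is an isomorphism, and by the inverse function theorem \emph{in the real-analytic category} $a$ is a real-analytic local diffeomorphism near $(e,m)$.

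The crux, and the only essentially global use of properness, is to shrink $\Sigma$ until $a$ is injective on all of $G\times\Sigma$. I would argue by contradiction: if no shrinking succeeds, then along a neighbourhood basis $\Sigma_n\downarrow\{m\}$ there are $g_n\in G$ and $\sigma_n,\tilde\sigma_n\in\Sigma_n$ with $g_n\cdot\sigma_n=\tilde\sigma_n$ lying outside the region where $a$ is already known injective. Here $\sigma_n\to m$ and $\tilde\sigma_n\to m$, so the pairs $(\sigma_n,g_n\cdot\sigma_n)\to(m,m)$ sit in a compact set; properness extracts a subsequence $g_n\to g$ with $g\cdot m=m$, and freeness gives $g=e$. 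But $g_n\to e$ contradicts the pairs lying outside the local-injectivity region. Hence for $\Sigma$ small enough $a|_{G\times\Sigma}$ is an injective real-analytic local diffeomorphism, thus a real-analytic diffeomorphism onto an open $G$-invariant set which I call $\varphi^{-1}(S)$ with $S:=\varphi(\Sigma)$.

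Finally I would assemble the global conclusion. The map $\psi:=a^{-1}=(\psi_1,\psi_2):\varphi^{-1}(S)\to G\times\Sigma\cong G\times S$ satisfies $\psi_2=\varphi$ and, by the equivariance $a(hg,\sigma)=h\cdot a(g,\sigma)$, the rule $\psi(g\cdot m)=(g\cdot\psi_1(m),\psi_2(m))$. Each $\Sigma$ meets every nearby orbit exactly once, so $\varphi|_\Sigma:\Sigma\to S$ is a bijection and the slices furnish an atlas on $M/G$; an overlap chart sends $\sigma\in\Sigma$ to the unique point of $\Sigma'$ on its orbit, which is real-analytic because it factors through $a^{-1}$ and the real-analytic action, so $M/G$ becomes a real-analytic manifold of dimension $\dim_\R M-\dim_\R G$. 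That its manifold topology equals $\tau_Q$ follows from $\varphi$ being continuous and open, its local model being the projection $G\times S\to S$, and uniqueness of the real-analytic structure is standard since any structure making $\varphi$ a real-analytic submersion is forced on the base.
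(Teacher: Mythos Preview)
The paper does not give its own proof of this statement: it is quoted verbatim as \cite[Theorem 1.11.4]{dk} and used as a black box in the proof of \autoref{theorem:realAnalyticStructureTop}. There is therefore nothing in the paper to compare your argument against.

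That said, your sketch is the standard proof of the slice theorem for free proper actions, carried out in the real-analytic category, and it is essentially correct. Two small points worth tightening: first, after establishing that the tube map $a$ is a local diffeomorphism near $(e,m)$, you should note explicitly that by the equivariance $a(g,\sigma)=g\cdot a(e,\sigma)$ the differential of $a$ at $(g,\sigma)$ is conjugate to that at $(e,\sigma)$, so once $\Sigma$ is shrunk enough that $d a_{(e,\sigma)}$ is invertible for all $\sigma\in\Sigma$, the map $a$ is a local diffeomorphism on \emph{all} of $G\times\Sigma$, not just near $(e,m)$. Second, in the contradiction step you should make explicit that an arbitrary collision $a(g,\sigma)=a(h,\tau)$ reduces, after replacing $g$ by $h^{-1}g$, to one of the form $a(g',\sigma)=a(e,\tau)$; then $g_n\to e$ together with $\sigma_n,\tilde\sigma_n\to m$ forces both preimage points into the already-established local injectivity neighbourhood of $(e,m)$, giving the contradiction. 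With these clarifications your argument is complete and matches the treatment in Duistermaat--Kolk.
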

 
\begin{remark}[label=rem:PFB]
 The above \autoref{thm:localTrivial} says that the triple $(\varphi,M,M/G)$ is a \textit{real-analytic principle fibre bundle with structure group $G$}.
\end{remark}

\begin{proof}[Proof of \autoref{theorem:realAnalyticStructureTop}]
We note that by \autoref{lemma:basicTopProps1} and \autoref{lemma:preimageMF} the set $\FreeFTwo$ is a real-analytic manifold and by \autoref{rem:realAnalyticLieGroup} we know that $\Isotropies$ is a real-analytic Lie group. Thus from \autoref{thm:localTrivial} and \autoref{rem:PFB} the conclusion for $\eps = +1$ follows. Next we show the claim for $\eps = -1$: For k=1,2 we set $N_k \coloneqq  \{\MapOneParameterFamilyThree{k+1}{s}{-}: s > 0\}$ and $N_0 \coloneqq N_1 \cap N_2 = \{\MapOneParameterFamilyThree{2}{1/2}{-}\}$. The corresponding preimages are denoted by $M_k \coloneqq \Pi^{-1}(N_k) \subset \FreeFTwo$, such that $M_0  \coloneqq M_1 \cap M_2 = \Pi^{-1}(N_0)$. We set $M \coloneqq M_1 \cup M_2$. By \autoref{lemma:preimageMF} for $k=1,2$ we have that $M_k$ is a real-analytic submanifold of $\FreeFTwo$. Thus by \autoref{thm:localTrivial} locally $M_k$ is real-analytically diffeomorphic to $\Isotropies \times S_k$, where $S_k$ is a real submanifold with $\dim_{\R}(S_k) = \dim_{\R}(M_k) - \dim_{\R}(\Isotropies) = 1$, by \autoref{lemma:preimageMF}. By \autoref{proposition:NormalForm2Nondeg} it is possible to normalize any element in $S_k$ with unique isotropies which depend real-analytically on elements of $S_k$. Thus, since $\dim_{\R}(N_k) = 1$, we map $S_k$ to $N_k$ via real-analytic diffeomorphisms. We obtain that for $k=1,2$ there exists an open neighborhood $U_k \subset \FreeFTwo$ of $N_0$ and a real-analytic diffeomorphism $\phi_k: U_k \rightarrow V_k$ such that $\phi_k(U_k \cap M_k) = (\Isotropies \times N_k) \cap V_k$, where $V_k$ is an open neighborhood of $N'_0 \coloneqq \{\id\} \times N_0 \subset \Isotropies \times M$, where $\id = (\id_{\C^2},\id_{\C^3})$. Moreover $\phi_k(U_k \cap N_k) = (\{\id\} \times N_k)\cap V_k$ and $\phi_k$ satisfies the properties given in \autoref{thm:localTrivial}. We define $\phi: U_0 \rightarrow V_0$, $\phi(x) \coloneqq \phi_k(x)$ for $x \in U_0 \cap U_k$, where $k=1,2$ and $V_0 = V_1 \cup V_2$ is an open neighborhood of $N_0'$. Write $\widetilde U \coloneqq U_1 \cap U_2 \cap U_0 \subset \FreeFTwo$ for an open neighborhood of $N_0$. Then we have $\phi\big|_{\widetilde U} = \phi_1\big |_{\widetilde U} =  \phi_2\big |_{\widetilde U}$, which implies that $\phi$ is a real-analytic diffeomorphism. Furthermore, since $\image(\phi_1 |_{\widetilde U \cap M}) = \image(\phi_2 |_{\widetilde U \cap M}) = (\Isotropies \times N_0) \cap \widetilde V$, where $\widetilde V$ is an open neighborhood of $N'_0\subset \Isotropies \times M$, the mapping $\phi$ locally maps $M_0$ real-analytically diffeomorphic to $\Isotropies \times N_0$. Finally the last statement follows from \autoref{thm:localTrivial}, since if $\FreeFTwo$ would be a smooth manifold, then the quotient $\FreeNTwo$ needs to be a smooth manifold, by the smooth version of \autoref{thm:localTrivial}, see also \cite[Theorem 1.11.4]{dk}, which is not the case.
\end{proof}

\begin{proof}[Proof of \autoref{theorem:quotientTop}]
We show that $\Pi: \FreeFTwo \rightarrow \FreeNTwo$ is a surjective, continuous and closed mapping with respect to $\tau_J$. Surjectivity is clear from \autoref{proposition:NormalForm2Nondeg} and \autoref{theorem:ReductionOneParameterFamilies2}. To show continuity of $\Pi$ with respect to $\tau_J$ we let $(H_n)_{n\in \N}$ be a sequence of mappings in $\FreeFTwo$ and $H \in \FreeFTwo$, such that $H_n \rightarrow H$. Assuming w.l.o.g. that $H \in \FreeNTwo$ we need to conclude that $\Pi(H_n) \rightarrow H$. We have $\Pi(H_n) = \phi'_n \circ H_n \circ \phi_n^{-1} \in \FreeNTwo$, where $(\phi_n,\phi'_n) \in \Isotropies$ are the isotropies according to \autoref{proposition:NormalForm2Nondeg}. Assume $ \phi'_n \circ H_n \circ \phi_n^{-1} \rightarrow \widehat H \in \FreeNTwo$, then by \autoref{lemma:SequenceN2Action} we obtain $(\phi_n, \phi_n') \rightarrow (\id_{\C^2},\id_{\C^3})$ and since $H_n \rightarrow H$ we get $\widehat H = H$. We are left by proving the closedness of $\Pi$ with respect to $\tau_J$: Let $C \subset \FreeFTwo$ be a closed subset. We need to show that $\Pi(C) \subset \FreeNTwo$ is a closed subset. To prove this statement we let $H_n \in \Pi(C)$ for $n\in \N$, forming a sequence of mappings in $\FreeNTwo$ such that $H_n \rightarrow H_0$, where $H_0 \in \FreeNTwo$. For the closedness of $\Pi(C)$ we need to conclude that $H_0 \in \Pi(C)$. By \autoref{theorem:ReductionOneParameterFamilies2} we can write $H_n = \MapOneParameterFamilyThree{k_n}{s_n}{\eps}$ and $H_0 = \MapOneParameterFamilyThree{k_0}{s_0}{\eps}$ for $k_n,k_0 \in \{2,3\}$. Note that since $H_n \rightarrow H_0$ in $\FreeNTwo$ we have $s_n \rightarrow s_0$. This implies that for any convergent sequence $G_n \in \Pi^{-1}(H_n)$ the map $G_0 \coloneqq \lim_{n \rightarrow \infty} G_n$ belongs to $\Pi^{-1}(H_0)$. Since $C$ is closed, an arbitrary convergent sequence $F_n \in \Pi^{-1}(H_n) \cap C$  with $F_n \rightarrow F_0$ thus satisfies $F_0 \in \Pi^{-1}(H_0) \cap C$, which implies $H_0 = \Pi(F_0) \in \Pi(C)$. 
\end{proof}

\address{Texas A\&M University at Qatar, PO Box 23874, Doha, Qatar}\\
\email{michael.reiter@qatar.tamu.edu}
\end{document}